  \edef\mtht{\the\textheight}
  \edef\mtwd{\the\textwidth}
  \definecolor{BackgroundColor}{RGB}{253, 246, 227}
\tikzset{
  commutative diagrams/.cd, 
  arrow style=tikz, 
  diagrams={>=stealth}
}
\addspace\texttt{\mkbibbrackets{\thefield{arxivclass}}}}}}
\addspace\texttt{\mkbibbrackets{\thefield{arxivclass}}}}}}
\newcommand{\printreferences}{\printbibliography[heading=bibintoc]}
\ifundef{\abstract}{}{\patchcmd{\abstract}%
    {\quotation}{\quotation\noindent\ignorespaces}{}{}}
\numberwithin{equation}{section}
\renewcommand{\eqref}[1]{\hyperref[#1]{\rm(\ref*{#1})}}
\def\makeautorefname#1#2{\AtBeginDocument{\expandafter\def\csname#1autorefname\endcsname{#2}}}
\newcommand{\mynewtheorem}[2]{
  \newaliascnt{#1}{equation}          
  \newtheorem{#1}[#1]{#2}
  \aliascntresetthe{#1}
  \makeautorefname{#1}{#2}
}
\newtheorem*{axiom*}{Axiom}
\newtheorem*{theorem*}{Theorem}
\newtheorem*{prop*}{Proposition}
\newtheorem*{conjecture*}{Conjecture}
\newtheorem{step}{Step}
\newtheorem{substep}{Step}
\numberwithin{substep}{step}
\newtheorem{case}{Case}
\numberwithin{subcase}{case}
\theoremstyle{remark}
\newtheorem*{remark*}{Remark}
\newtheorem*{convention*}{Convention}
\newtheorem*{conventions*}{Conventions}
\theoremstyle{remark}
\theoremstyle{definition}
\newtheorem*{definition*}{Definition}
\newtheorem*{example*}{Example}
\newtheorem*{question*}{Question}
\DeclareFontFamily{U}{mathx}{\hyphenchar\font45}
\DeclareFontShape{U}{mathx}{m}{n}{
      <5> <6> <7> <8> <9> <10>
      <10.95> <12> <14.4> <17.28> <20.74> <24.88>
      mathx10
      }{}
\DeclareSymbolFont{mathx}{U}{mathx}{m}{n}
\DeclareMathAccent{\widecheck}{0}{mathx}{"71}
\DeclareMathAccent{\wideparen}{0}{mathx}{"75}
\DeclareMathOperator{\End}{End}
\DeclareMathOperator{\HF}{\HF}
\DeclareMathOperator{\Hom}{Hom}
\DeclareMathOperator{\area}{area}
\DeclareMathOperator{\supp}{supp}
\DeclarePairedDelimiter{\Abs}{\|}{\|}
\DeclarePairedDelimiter{\abs}{\lvert}{\rvert}
\DeclarePairedDelimiter{\braket}{\langle}{\rangle}
\DeclarePairedDelimiter{\set}{\lbrace}{\rbrace}
\def\({\left(}
\def\){\right)}
\def\<{\left\langle}
\def\>{\right\rangle}
\newcommand{\Gtwo}{G_2}
\newcommand{\N}{{\mathbf{N}}}
\newcommand{\Q}{\mathbf{Q}}
\newcommand{\R}{\mathbf{R}}
\newcommand{\SU}{\mathrm{SU}}
\newcommand{\Spin}{\mathrm{Spin}}
\newcommand{\Z}{\mathbf{Z}}
\newcommand{\co}{\mskip0.5mu\colon\thinspace}
\newcommand{\defined}[2][\key]{\def\key{#2}\textbf{#2}\index{#1}}
\newcommand{\delbar}{\bar{\del}}
\newcommand{\del}{\partial}
\newcommand{\id}{\mathrm{id}}
\newcommand{\inner}[2]{\braket{#1, #2}}
\newcommand{\iso}{\cong}
\newcommand{\loc}{\mathrm{loc}}
\newcommand{\qandq}{\quad\text{and}\quad}
\newcommand{\qand}{\quad\text{and}}
\newcommand{\sing}{\mathrm{sing}}
\newcommand{\so}{\mathfrak{so}}
\newcommand{\vol}{\mathrm{vol}}
\newcommand{\wto}{\rightharpoonup}
\renewcommand{\H}{\mathbf{H}}
\renewcommand{\Im}{\operatorname{Im}}
\renewcommand{\P}{\mathbf{P}}
\renewcommand{\emptyset}{\varnothing}
\renewcommand{\epsilon}{\varepsilon}
\renewcommand{\setminus}{{\backslash}}
\renewcommand{\leq}{\leqslant}
\renewcommand{\geq}{\geqslant}
\renewcommand*\env@matrix[1][*\c@MaxMatrixCols c]{%
  \hskip -\arraycolsep
  \let\@ifnextchar\new@ifnextchar
  \array{#1}}
\renewcommand\xleftrightarrow[2][]{%
  \ext@arrow 9999{\longleftrightarrowfill@}{#1}{#2}}
\newcommand\longleftrightarrowfill@{%
  \arrowfill@\leftarrow\relbar\rightarrow}
\newcommand{\rd}{{\rm d}}
\newcommand{\sfF}{{\sf F}}
\newcommand{\ur}{{\underline r}}
\newcommand{\uv}{{\underline v}}
\newcommand{\cH}{\mathcal{H}}
\newcommand{\cM}{\mathcal{M}}
\newcommand{\sE}{\mathscr{E}}
\newcommand{\sR}{\mathscr{R}}
\newcommand{\fd}{{\mathfrak d}}
\newcommand{\fe}{{\mathfrak e}}
\newcommand{\fri}{{\mathfrak i}}
\newcommand{\fp}{{\mathfrak p}}
\newcommand{\fs}{{\mathfrak s}}
\newcommand{\fz}{{\mathfrak z}}
\newcommand{\fF}{{\mathfrak F}}
\newcommand{\fH}{{\mathfrak H}}
\newcommand{\fX}{{\mathfrak X}}
\newcommand{\fY}{{\mathfrak Y}}
\newcommand{\slS}{\slashed S}
\author{
  Thomas Walpuski
}
\title{A compactness theorem for Fueter sections}
\date{2017-03-01}
\begin{document}

\maketitle

\begin{abstract}
  We prove that a sequence of Fueter sections of a bundle of compact hyperkähler manifolds $\fX$ over a $3$--manifold $M$ with bounded energy converges (after passing to a subsequence) outside a $1$--dimensional closed rectifiable subset $S \subset M$.
  The non-compactness along $S$ has two sources:
  (1) Bubbling-off of holomorphic spheres in the fibres of $\fX$ transverse to a subset $\Gamma \subset S$, whose tangent directions satisfy strong rigidity properties.
  (2) The formation of non-removable singularities in a set of $\cH^1$--measure zero.
  Our analysis is based on the ideas and techniques that Lin developed for harmonic maps \cite{Lin1999}.
  These methods also apply to Fueter sections on $4$--dimensional manifolds; we discuss the corresponding compactness theorem in an appendix.
  We hope that the work in this paper will provide a first step towards extending the hyperkähler Floer theory developed by \citet{Hohloch2009,Salamon2013} to general target spaces.
  Moreover, we expect that this work will find applications in gauge theory in higher dimensions.
\end{abstract}

\section{Introduction}
\label{Sec_Introduction}

Let $M$ be an orientable Riemannian $3$--manifold, let $\fX \xrightarrow{\pi} M$ be a bundle of hyperkähler manifolds together with a fixed isometric identification $I \co STM \to \fH(\fX)$ of the unit tangent bundle in $M$ and the bundle of hyperkähler spheres\footnote{Given a hyperkähler manifold $(X,g,I_1,I_2,I_3)$, for each $\xi = (\xi_1,\xi_2,\xi_3) \in S^2 \subset \R^3$, $I_\xi := \sum_{i=1}^3 \xi_i I_i$ is a complex structure.
The set $\fH(X) := \set{ I_\xi : \xi \in S^2 }$ is called the \defined{hyperkähler sphere} of $X$.} of the fibres of $\fX$, and fix a connection on $\fX$.

\begin{definition}
  A section $u \in \Gamma(\fX)$ is called a \defined{Fueter section} if
  \begin{equation}
    \label{Eq_Fueter}
    \fF u := \sum_{i=1}^3 I(v_i) \nabla_{v_i} u = 0 \in \Gamma(u^*V\fX)
  \end{equation}
  for some local orthonormal frame $(v_1,v_2,v_3)$.\footnote{Of course, $\fF$ does not depend on the choice of $(v_1,v_2,v_3)$.}
  Here $\nabla u \in \Omega^1(M,u^*V\fX)$ is the covariant derivative of $u$, a $1$--form taking values in the pull-back of the vertical tangent bundle $V\fX := \ker\(\rd\pi \co T\fX \to TM\)$.
  The operator $\fF$ is called the \defined{Fueter operator}.
\end{definition}

The Fueter operator is a non-linear generalisation of the Dirac operator, see \citet{Taubes1999b} and \citet[Section 3]{Haydys2013}.

\begin{remark}
  A construction similar to \eqref{Eq_Fueter} also exists in dimension four.
  Since it is more involved, we relegate its discussion to \autoref{Sec_DimensionFour}.
\end{remark}

\begin{example}
  Choose a spin structure $\fs$ on $M$.
  If $\fX = \slS$, $I$ is the Clifford multiplication and $\nabla$ denotes the induced spin connection, then the Fueter operator is simply the Dirac operator associated with $\fs$.
\end{example}

\begin{example}
  \label{Ex_FueterMap}
  Let $(X,g,I_1,I_2,I_3)$ be a hyperkähler manifold and $(v_1,v_2,v_3)$ a orthonormal frame of $M$.
  A map $u\co M \to X$ satisfying
  \begin{equation}
    \label{Eq_FueterMap}
    \fF u = \sum_{i=1}^3 I_i \rd u(v_i) = 0
  \end{equation}
  is called a Fueter map.
  In a local trivialisation the Fueter equation for sections of $\fX$, takes the form \eqref{Eq_FueterMap} up to allowing for the $I_i$ to depend on $x \in M$ and admitting a lower order perturbation (coming from the connection $1$--form).
\end{example}

One of the main motivations for studying Fueter sections is the work of \citet{Hohloch2009}, who introduced a functional whose critical points are precisely the solution of \eqref{Eq_FueterMap} and developed the corresponding Floer theory in the case when the target $X$ is compact and flat, and the frame on $M$ is divergence free and regular,\footnote{%
  Every $3$--manifold admits a divergence free frame by Gromov's h--principle \cite[Theorem A.1]{Salamon2013}.
  A frame is regular if there are no non-constant Fueter maps $M \to \H$ with respect to this frame; this is a generic condition,.
}
see also \citet{Salamon2013}.
The requirement that $X$ be flat is very severe and one would like to remove it.
It has been conjectured that the putative hyperkähler Floer theory should be very rich and interesting, especially in the case when $X$ is a $K3$ surface.

A further source of motivation is gauge theory on $\Gtwo$-- and $\Spin(7)$--manifolds.
Here, Fueter sections of bundles of moduli spaces of ASD instantons naturally appear in relation with codimension four bubbling phenomena for $\Gtwo$-- and $\Spin(7)$--instantons; see Donaldson--Segal~\cite{Donaldson2009} and the author \cites{Walpuski2013a,Walpuski2014} for further details.

\begin{remark}
  Sonja Hohloch brought to the author's attention a cryptic remark in \citet[Section 1.5 Question 3]{Kontsevich2008}, which indicates that their invariants of 3D Calabi--Yau categories with stability structure can be interpreted as ``quaternionic Gromov--Witten invariants'' of certain hyperkähler manifold $\cM$, which means as a count of Fueter maps from some $4$--manifold to $\cM$.
\end{remark}

A major issue when dealing with Fueter sections is the potential failure of compactness.
This is demonstrated by the following example due to Hohloch, Noetzel, and Salamon.

\begin{example}
  \label{Ex_HNS}
  Consider a $K3$ surface $X$ with a hyperkähler structure such that $(X,I_1)$ admits a non-trivial holomorphic sphere $\fz \co S^2 \to X$ and take $M = \SU(2)$, the unit-sphere in the quaternions $\H$, with a left-invariant frame $(v_1,v_2,v_2)$ which at $\id \in \SU(2)$ it is given by $(i,j,k)$.
  Let $\bar\cdot \co S^2 \to S^2$ denote complex conjugation on $S^2 = \P^1$.
  Let $\pi \co S^3 \to S^2$ denote the Hopf fibration whose fibres are the orbits of $v_1$.
  It is easy to check that $u = \fz \circ \bar\cdot \circ \pi \co S^3 \to X$ satisfies
  \begin{equation*}
    \del_{v_1} u = 0 \qandq
    \del_{v_2} u - I\del_{v_3} u = 0,
  \end{equation*}
  and thus $u$ is a Fueter map.
  For $\lambda > 0$ define a conformal map $s_\lambda \co S^2 \to S^2$ by $s_\lambda(x) = \lambda x$ for $x \in \R^2 \subset S^2$ and $s_\lambda(\infty) = \infty$.
  Now, the family of Fueter maps $u_\lambda := \fz \circ s_\lambda \circ \pi$ blows up along the Hopf circle $\pi^{-1}(\infty)$ as $\lambda \downarrow 0$ and converges to the constant map on the complement of the Hopf circle.
  Also, note that $\sE(u_\lambda) = \int_{S^3} |\nabla u_\lambda|^2$ is independent of $\lambda$.
\end{example}

The following is the main result of this article.

\begin{theorem}
  \label{Thm_A}
  Suppose $\fX$ is compact.
  Let $(u_i)$ be a sequence of solutions of the (perturbed) Fueter equation
  \begin{equation}
    \label{Eq_FueterPerturbed}
    \fF u_i = \fp \circ u_i
  \end{equation}
  with $\fp \in \Gamma(\fX,V\fX)$\footnote{%
    This sort of deformation of \eqref{Eq_Fueter} is important for applications; e.g., Hohloch, Noetzel, and Salamon perturb \eqref{Eq_Fueter} using a Hamiltonian function to achieve transversality.} and
  \begin{equation}
    \label{Eq_EnergyBound}
    \sE(u_i) := \int_{M} |\nabla u_i|^2 \leq c_\sE
  \end{equation}
  for some constant $c_\sE > 0$.
  Then (after passing to a subsequence) the following holds:
  \begin{itemize}
  \item
    There exists a closed subset $S$ with $\cH^1(S) < \infty$ and a Fueter section $u \in \Gamma(M \setminus S, \fX)$ such that $u_i|_{M\setminus S}$ converges to $u$ in $C^\infty_\loc$.
  \item 
    There exist a constant $\epsilon_0 > 0$ and an upper semi-continuous function $\Theta \co S \to [\epsilon_0,\infty)$ such that the sequence of measures $\mu_i := |\nabla u_i|^2 \,\cH^3$ converges weakly to $\mu = |\nabla u|^2 \,\cH^3 + \Theta \,\cH^1\lfloor S$.
  \item
    $S$ decomposes as
    \begin{equation*}
      S = \Gamma \cup \sing(u)
    \end{equation*}
    with
    \begin{align*}
      \Gamma
      &:=
        \supp(\Theta \,\cH^1\lfloor S) \qand \\
      \sing(u)
      &:=
        \set*{
          x \in M
          :
          \limsup_{r\downarrow 0} \frac1r \int_{B_r(x)} |\nabla u|^2 > 0
        }.
    \end{align*}
    $\Gamma$ is $\cH^1$--rectifiable, and  $\cH^1(\sing(u)) = 0$.
  \item
    For each smooth point\footnote{We call a point $x \in \Gamma$ smooth if the tangent space $T_x\Gamma$ exists and $x \notin \sing(u)$.  Since $\Gamma$ is rectifiable, $T_x\Gamma$ exists almost everywhere.} $x \in \Gamma$, there exists a non-trivial holomorphic sphere $\fz_x \co S^2 \to (\fX_x := \pi^{-1}(x),-I(v))$ with $v$ a unit tangent vector in $T_x\Gamma$.
    Moreover,
    \begin{equation*}
      \Theta(x) \geq \sE(\fz_x) := \int_{S^2} |\rd \fz_x|^2.
    \end{equation*}
  \item
    If $\fX$ is a bundle of simple hyperkähler manifolds with $b_2 \geq 6$, then there is a subbundle $\fd \subset \P TM$, depending only on $\sup \Theta$, whose fibres are finite sets such that $T_x\Gamma \in \fd$ for all smooth points $x \in \Gamma$.
  \end{itemize}
\end{theorem}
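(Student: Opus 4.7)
The plan is to follow Lin's framework \cite{Lin1999} for compactness of stationary harmonic maps, adapted to the Fueter setting and supplemented with a hyperkähler argument for the direction constraint.

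First I would derive an almost-monotonicity formula for the rescaled energy $r \mapsto r^{-1}\int_{B_r(x)} |\nabla u|^2$ on any solution of \eqref{Eq_FueterPerturbed}. The Weitzenböck formula for $\fF$ makes $\fF^*\fF - \nabla^*\nabla$ a curvature-type term of order zero in $\nabla u$, which combined with the perturbation $\fp$ gives monotonicity up to a factor $e^{Cr}$ and a harmless additive tail. A standard De Giorgi--Moser iteration on the pointwise energy density then yields $\epsilon$-regularity: there is $\epsilon_0>0$ such that $r^{-1}\int_{B_r(x)}|\nabla u|^2 < \epsilon_0$ forces $C^\infty_\loc$ bounds on $B_{r/2}(x)$.

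Define the concentration set
\[
  S := \bigcap_{r>0} \set{ x \in M : \limsup_i r^{-1} \!\int_{B_r(x)} |\nabla u_i|^2 \geq \epsilon_0 }.
\]
Outside $S$, $\epsilon$-regularity gives uniform $C^\infty_\loc$ bounds, so after a subsequence $u_i\to u$ locally smoothly on $M\setminus S$. A Vitali covering argument based on monotonicity bounds $\cH^1(S)$ by $C c_\sE/\epsilon_0$. Extracting another subsequence, $\mu_i \wto \mu$, and the defect measure $\nu := \mu - |\nabla u|^2\,\cH^3$ is supported in $S$. The monotonicity formula survives in the limit, so the $1$-density $\Theta(x) := \lim_{r\downarrow 0} r^{-1}\mu(B_r(x))$ exists, is upper semi-continuous, and where non-zero is $\geq \epsilon_0$. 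Setting $\Gamma := \supp(\Theta\,\cH^1\lfloor S)$ and letting $\sing(u)$ be as in the statement, one obtains $S = \Gamma \cup \sing(u)$, and $\cH^1(\sing(u)) = 0$ follows from a Federer-style dimension estimate for the energy-concentration set of the limit.

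For the structure of $\Gamma$ I would analyse tangent measures of $\mu$ in the style of Lin. Rescaling $u_i$ at $x\in\Gamma$ by $r\downarrow 0$ and diagonalising produces a Fueter limit on $\R^3$ whose defect measure is $1$-uniform; a Preiss-type argument, coupled with dilation invariance of the limit, forces this tangent measure to be supported on a line $\ell_x \subset T_xM$, proving $\cH^1$-rectifiability of $\Gamma$. At a smooth point $x$ with unit tangent $v$ I would perform a second rescaling transverse to $\ell_x$: in the $2$-dimensional transverse slice the $v$-component of the Fueter equation drops out, and the remaining two terms combine into the Cauchy--Riemann equation for $I(v)$ on the fibre $\fX_x$. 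Removal of singularities extends the bubble to $\fz_x\co S^2\to\fX_x$; non-triviality follows from $\Theta(x)\geq \epsilon_0$, and $\Theta(x)\geq \sE(\fz_x)$ is the standard no-energy-loss inequality between defect mass and bubble energy.

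The main obstacle, and the place where hyperkähler geometry genuinely enters, is the final bullet. The class $\alpha := [\fz_x] \in H_2(\fX_x;\Z)$ must be of type $(1,1)$ for $I(v)$, i.e.\ it must annihilate the two Kähler forms in $\fH(\fX_x)$ orthogonal to $v$; under $I\co STM\to \fH(\fX)$ this translates into two linear conditions on $v\in \P T_xM$. For a simple hyperkähler manifold with $b_2\geq 6$, these conditions cut out a discrete (generically two-point) subset of $\P T_xM$, and Gromov compactness in the compact fibre bounds the set of admissible classes $\alpha$ purely in terms of $\sup\Theta$; unifying over $x$ produces the finite subbundle $\fd \subset \P TM$. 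The serious technical work is the clean execution of the transverse bubbling step: one must show that the extracted class $\alpha$ is well defined (independent of the rescaling sequence), rule out bubble trees whose total class depends on the concentration profile, and verify that the limiting transverse equation is truly $I(v)$-pseudoholomorphic rather than a more general elliptic system carrying extra lower-order coupling to the $v$-direction.
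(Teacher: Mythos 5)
Your architecture matches the paper's (which follows Lin), but the foundational step --- almost-monotonicity of $r \mapsto r^{-1}\int_{B_r(x)}|\nabla u|^2$ --- does not follow from a Weitzenböck formula, and your proposed derivation would fail. A Weitzenböck identity $\fF^*\fF = \nabla^*\nabla + \{\text{curvature}\}$ yields the Bochner-type inequality $\Delta|\nabla u|^2 \lesssim |\nabla u|^4 + 1$ that drives $\epsilon$--regularity (via the Heinz trick or Moser iteration), but it produces no monotone quantity and, crucially, no control on the radial derivative. For harmonic maps, monotonicity comes from stationarity under inner variations (the divergence-free stress--energy tensor), an input a Weitzenböck formula does not supply. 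The correct mechanism for Fueter sections is the pointwise energy identity $|\nabla u|^2\,\vol = |\fF u|^2\,\vol + 2u^*\Lambda$ with $\Lambda \in \Omega^3(\fX)$ built from $I$; writing $\Lambda = \rd\Omega + \fe$ with a controlled error and applying Stokes, the boundary integrand is rearranged by a quaternionic identity into $2|\nabla_r u|^2 - |\nabla u|^2$, and this is precisely what puts the term $\int \rho^{-1}|\nabla_r u|^2$ on the right-hand side of the monotonicity formula. That term is not cosmetic: it is what you need later to prove asymptotic translation invariance of the rescaled maps along $T_x\Gamma$ (monotonicity applied at centres displaced along the tangent line, combined with a Hardy--Littlewood maximal function argument) and conicality of tangent cones. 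As written, both the monotonicity step and the translation-invariance step underlying your transverse bubbling are therefore unsupported; your worry that the transverse limit might not be genuinely $I(v)$--holomorphic, by contrast, is easily dispatched once translation invariance is in hand, since then $\fF = I(v_1)\del_v + I(v_2)\bar\del$ reduces to $\bar\del u = 0$.

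For the final bullet your route genuinely differs from the paper's. The paper dualises $[\fz_x]$ via the Beauville--Bogomolov--Fujiki form, uses Amerik--Verbitsky's uniform bound $q(\gamma,\gamma)\geq -\sigma$ for MBM classes (this is where simplicity and $b_2 \geq 6$ enter) to control the negative-definite part of $\gamma$, and concludes finiteness by a lattice argument; each class then determines $\xi$, hence the tangent direction, because its projection to the positive $3$--plane must be a positive multiple of $\omega_\xi$. Your finiteness of admissible classes via Gromov compactness over the compact family $\fH(\fX_x)$ --- or, more simply, via the comass bound $|\<[\fz_x],\beta\>| \leq \|\beta\|_{C^0}\,\area(\fz_x)$ applied to a basis of $H^2$ --- buys an argument that does not visibly need $b_2 \geq 6$. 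You should, however, make the step ``class determines direction'' explicit: the two type-$(1,1)$ conditions cut out a point pair in $\P T_xM$ only when the positive part of the dual class is non-zero, which is exactly what positivity of the area guarantees.
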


\begin{remark}
  The analysis of \eqref{Eq_Fueter} is similar to Lin's work on the compactness problem for harmonic maps~\cite{Lin1999}.
  We follow his strategy quite closely;
  however, there are a number of simplifications in our case, many of the arguments have to be approached from a different angle and our result is stronger. 
\end{remark}

\begin{remark}
  In the situation of \autoref{Ex_FueterMap} if $X$ is flat and $(v_1,v_2,v_3)$ is regular, then the uniform energy bound \eqref{Eq_EnergyBound} is automatically satisfied;
  see \citet[Lemma 3.2 and Remark 3.5]{Salamon2013}.
\end{remark}

\begin{remark}
  \label{Rmk_HarmonicSections}
  If $I$ is parallel (which is very rarely the case, but holds, e.g., in the situation of \autoref{Ex_FueterMap} if $M = T^3$ equipped with a flat metric and the $v_i$ are parallel), then there are topological energy bounds;
  see \autoref{Rmk_LambdaClosedEnergyBound}.
  In this case Fueter sections are stationary harmonic sections and one can derive most of \autoref{Thm_A} from \cite{Lin1999};
  cf.~\citet[Section 4]{Li1998} and \citet{Chen2000}, who study triholomorphic/quaternionic maps between hyperkähler manifolds.
  More recently, very important progress in the study of triholomorphic maps was made by \citet{Bellettini2015}.
\end{remark}

\begin{remark}
  In the situation of \autoref{Ex_FueterMap} if $X$ is flat, then $S = \emptyset$;
  see \citet[Section 3]{Hohloch2009} and \autoref{Rmk_FlatEpsilonRegularity}.
  This does not immediately follow from \autoref{Thm_A};
  however, since $\pi_2(T^n) = 0$, flat hyperkähler manifolds admit no non-trivial holomorphic spheres and we can rule out bubbling a priori, i.e., $\Gamma = \emptyset$.
  See also \autoref{Rmk_FlatEpsilonRegularity}.
\end{remark}

\begin{remark}
  By Bogomolov's decomposition theorem (after passing to a finite cover) any hyperkähler manifold is a product a flat torus and simple hyperkähler manifolds.
  Hohloch, Noetzel, and Salamon's compactness result says that nothing interesting happens in the torus-factors.
  Thus the assumption of $\fX$ being a bundle of simple hyperkähler manifolds is not restrictive.
  The requirement $b_2 \geq 6$ is an artefact of a result of Amerik and Verbitsky that we use in \autoref{Sec_TangentDirections}.
\end{remark}

As stated, \autoref{Thm_A} is very likely far from optimal.
Here are some conjectural improvements:
\begin{itemize}
\item 
  We believe that the limiting section $u \in \Gamma(M\setminus S,\fX)$ extends to $M\setminus \sing(u)$ and, moreover, that $\sing(u)$ is finite (possibly countable and closed).
\item
  We believe that $\Gamma$ enjoys much better regularity than just being $\cH^1$--rectifiable.
  It seems reasonable to expect that $\Gamma$ is a graph (possibly with countably many vertices) embedded in $M$ and $\Theta$ is constant along the edges of $\Gamma$;
  moreover, we expect that the vertices $(\Gamma,\Theta)$ are balanced.
\end{itemize}

\begin{remark}
  In the situation of \autoref{Rmk_HarmonicSections}, Bethuel's removable singularities theorem for stationary harmonic maps \cite[Theorem I.4]{Bethuel1993} shows that $u$ extends to $M \setminus \sing(u)$ and a result of \citet{Allard1976} affirms the conjecture in the third bullet.
\end{remark}

The holomorphic sphere $\fz_x$ can be replaced by a bubble-tree, cf.~\citet{Parker1993}, such that the energy of the entire bubble tree equals $\Theta(x)$.
In an earlier version of this article it was conjectured that there can be no energy stuck on the necks;
in, particular $\Theta(x)$ is the sum of energies of holomorphic spheres in $(\fX_x,-I(v))$.
Shortly after the first version of this article was posted on the arXiv,
\citet{Bellettini2015} proved the analogue of this conjecture for triholomorphic maps, and after a brief discussion with the author, in an updated version also the author's earlier conjecture.
We refer the reader to \cite[Section 7]{Bellettini2015} for details.

It is an interesting and important question to ask: what happens for a generic choice of $I\co STM \to \fH(\fX)$ and perturbation $\fp$?
One would hope (perhaps too optimistically) that generically the situation is much better and possibly good enough to count solutions of \eqref{Eq_FueterPerturbed} and thus define the Euler characteristic of the conjectural hyperkähler Floer theory.

\paragraph{Assumptions and conventions}
Throughout the rest of the article we assume the hypotheses of \autoref{Thm_A}.
We use $c$ to denote a generic constant.
We write $x \lesssim y$ for $x \leq c y$ and $\{\cdot,\ldots,\cdot\}$ denotes a generic (multi-)linear expression which is bounded by $c$.
We fix a constant $0 < r_0 \ll 1$; in particular, $r_0$ is much smaller than the injectivity radius of $M$ and we take all radii to be at most $r_0$.


\section{Mononicity formula}
\label{Sec_Monotonicity}

The foundation of the analysis of \eqref{Eq_Fueter} is the monotonicity formula which asserts that the renormalised energy
\begin{equation*}
  \frac1r \int_{B_r(x)} \abs{\nabla u}^2.
\end{equation*}
is almost monotone in $r > 0$:

\begin{prop}
  \label{Prop_Monotonicity}
  If $u \in \Gamma(M, \fX)$ satisfies \eqref{Eq_FueterPerturbed}, then for all $x \in M$ and $0 < s < r \leq r_0$
  \begin{equation*}
    \label{Eq_Monotonicity}
    \frac{e^{cr}}{r} \int_{B_r(x)} \abs{\nabla u}^2
    - \frac{e^{cs}}{s} \int_{B_s(x)} \abs{\nabla u}^2
    \geq
      \int_{B_r(x)\setminus B_s(r)} \frac1\rho \abs{\nabla_r u}^2 - c(r^2 - s^2).
  \end{equation*}
  Here $\rho := d(x,\cdot)$.
\end{prop}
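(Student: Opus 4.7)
The plan is to derive this by the standard Pohozaev / stress--energy tensor technique, adapted from Lin's treatment of stationary harmonic maps~\cite{Lin1999}. The key input specific to Fueter sections is that, although $\fF$ is only first order, the equation $\fF u = \fp\circ u$ pointwise controls the full second--order quantity $\nabla^*\nabla u$ via a Weitzenböck identity, putting the problem on essentially the same footing as the harmonic map case.

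First I would establish the Weitzenböck step. Writing $\fF = I(v_i)\nabla_{v_i}$ and expanding $\fF^*\fF u$ using the quaternionic relations $I(v_i)I(v_j) + I(v_j)I(v_i) = -2\delta_{ij}$, the symmetric part of $\nabla_{v_i}\nabla_{v_j} u$ reassembles into $\nabla^*\nabla u$, while the antisymmetric part produces curvature terms. One obtains schematically
\[
\fF^*\fF u = \nabla^*\nabla u + \cR\cdot\nabla u + \cR',
\]
where $\cR,\cR'$ are uniformly bounded thanks to the compactness of $\fX$ (bounded curvature of $M$, of the fixed connection on $\fX$, and of the fibres) and the smoothness of the identification $I$. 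Since $\fF u = \fp\circ u$ and $\fp$ is smooth, the left--hand side equals $\fF^*(\fp\circ u)$ and is bounded by $c(|\nabla u|+1)$, giving the pointwise Laplacian bound $|\nabla^*\nabla u| \leq c(|\nabla u|+1)$.

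Next I would run the Pohozaev argument. Introduce the stress--energy tensor $T_{ij} := 2\<\nabla_i u,\nabla_j u\> - g_{ij}|\nabla u|^2$. Commuting covariant derivatives via the Ricci identity and invoking the Laplacian bound above gives $|\nabla^j T_{ij}| \leq c(|\nabla u|^2 + 1)$. In geodesic normal coordinates centred at $x$, the radial vector field $X = \rho\partial_\rho$ satisfies $\nabla_j X_i = g_{ij} + O(\rho^2)$, so $T^{ij}\nabla_j X_i = -|\nabla u|^2 + O(\rho^2)|\nabla u|^2$, and on $\partial B_r$ one has $T_{ij}X^i n^j = r(2|\nabla_r u|^2 - |\nabla u|^2)$. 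Applying the divergence theorem to the vector field $T^{ij}X_j$ on $B_r$ and collecting errors yields
\[
2r\int_{\partial B_r}|\nabla_r u|^2 - r\int_{\partial B_r}|\nabla u|^2 + \int_{B_r}|\nabla u|^2 \leq cr\int_{B_r}\(|\nabla u|^2 + 1\).
\]

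Finally, set $f(r) := r^{-1}\int_{B_r(x)}|\nabla u|^2$. Since $f'(r) = r^{-1}\int_{\partial B_r}|\nabla u|^2 - r^{-2}\int_{B_r}|\nabla u|^2$, dividing the previous inequality by $r^2$ and rearranging turns it into the differential inequality
\[
f'(r) + c\,f(r) \geq \frac{2}{r}\int_{\partial B_r}|\nabla_r u|^2 - cr^2.
\]
Multiplying by the integrating factor $e^{cr}$, integrating from $s$ to $r$, and rewriting the resulting $\int_s^r t^{-1}\int_{\partial B_t}|\nabla_r u|^2\,dt$ via the coarea formula gives the asserted inequality (after possibly adjusting $c$). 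The main obstacle is the bookkeeping in the Weitzenböck step: because $\fF$ acts on sections of a \emph{nonlinear} bundle with a chosen connection and a position--dependent Clifford structure, one has to identify carefully which contributions to $\fF^*\fF - \nabla^*\nabla$ are of genuinely lower order, and to verify that all constants entering the error terms are controlled uniformly by the compactness of $\fX$ and smoothness of $\fp$, so that the ODE argument closes with the stated $O(r^2)$ loss.
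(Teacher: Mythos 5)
Your argument is correct, but it proves the monotonicity formula by a genuinely different mechanism than the paper. The paper's proof is entirely first order: it rests on the pointwise ``calibration'' identity $|\nabla u|^2\,\vol=|\fF u|^2\,\vol+2u^*\Lambda$ (equation \eqref{Eq_Energy}, built from the Kähler forms $\omega_i$ and the frame), writes $\Lambda=\rd\Omega+\fe$ with controlled error, applies Stokes' theorem to $u^*\Omega$, and then uses the quaternionic relations on $\del B_\rho$ to convert $-2\,i(\del_r)u^*\Lambda$ into $(2|\nabla_ru|^2-|\nabla u|^2+\dots)$. The equation is never differentiated. You instead go to second order: the Weitzenböck identity $\sfF\fF u=\bar\nabla^*\nabla u+\{\nabla u\}+O(1)$ (which the paper does prove, but only later, in \autoref{Prop_DifferentialInequality} for the $\epsilon$--regularity) gives $|\bar\nabla^*\nabla u|\lesssim|\nabla u|+1$, and then the standard stress--energy/Pohozaev computation for stationary harmonic maps produces the same differential inequality $f'(\rho)\geq\tfrac{2}{\rho}\int_{\del B_\rho}|\nabla_ru|^2-cf(\rho)-c\rho$, which integrates identically. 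Both routes are sound here because $u$ is smooth; note that the antisymmetrised Hessian of a section contributes only bounded curvature terms (no cubic term in $\nabla u$), so your bound $|\nabla^jT_{ij}|\lesssim|\nabla u|^2+1$ is justified, and your one--sided inequality is in fact the direction needed after rearranging. What the paper's route buys is that it isolates Stokes' theorem as the \emph{only} step requiring global smoothness, which is exactly what is exploited in \autoref{Prop_SingularMonotonicityFormula} to extend the monotonicity formula to the limit section defined only off $S$; your route would also extend (the divergence--theorem error $(\nabla^iT_{ij})X^j$ is likewise $O(\rho(|\nabla u|^2+1))$, so the same cut-off argument applies), but one would additionally have to justify the pointwise second--order bounds up to $S$. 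What your route buys is conceptual clarity: it exhibits Fueter sections as maps whose tension field is of lower order, so that the entire stationary--harmonic--map monotonicity machinery applies verbatim, with the hyperkähler structure entering only through the Weitzenböck step rather than through the calibration form $\Lambda$.
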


It is instructive to first prove the following which contains the essence of \autoref{Prop_Monotonicity}.

\begin{prop}
  \label{Prop_FlatMonotonicity}
  If $u\co \R^3 \to X$ is a Fueter map with $v_i = \frac{\del}{\del x^i}$, then for all $x \in M$ and $0 < s < r$
  \begin{equation}
    \label{Eq_FlatMonotonicity}
    \frac{1}{r} \int_{B_r(x)} \abs{\rd u}^2
    - \frac{1}{s} \int_{B_s(x)} \abs{\rd u}^2
    =
      2\int_{B_r(x)\setminus B_s(r)} \frac1\rho \abs{\del_r u}^2.
  \end{equation}
\end{prop}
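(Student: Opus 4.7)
My plan is to reduce \autoref{Prop_FlatMonotonicity} to the classical monotonicity formula for harmonic maps into $X$, by first showing that a Fueter map $u\co\R^3\to X$ is automatically harmonic, and then running the standard stress--energy tensor argument in dimension $n=3$. The factor $1/r$ (rather than $1/r^{n-2}$ with $n=3$ it is the same, but the placement of the factor $2$ in \eqref{Eq_FlatMonotonicity} is exactly what one obtains for harmonic maps); and the identity is an equality, not just an inequality, since there is no curvature of $M$ and no perturbation $\fp$ contributing error terms.

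\paragraph{Step 1: Fueter implies harmonic.}
Apply $\sum_j I_j \nabla_j$ to the Fueter equation $\sum_i I_i\partial_i u = 0$. Since $X$ is hyperkähler, each $I_i$ is parallel with respect to the Levi-Civita connection on $X$, so
\begin{equation*}
  0 \;=\; \sum_{i,j} I_j I_i\, \nabla_j \partial_i u.
\end{equation*}
Split the sum into $i=j$ and $i\neq j$, use $I_i^2 = -1$ and the quaternionic anticommutation $I_iI_j+I_jI_i=0$ for $i\neq j$, together with the symmetry $\nabla_j\partial_i u = \nabla_i\partial_j u$ of the pullback Levi-Civita connection. The off-diagonal terms pair up and vanish, leaving $\sum_i \nabla_i\partial_i u = 0$, i.e., $u$ is harmonic.

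\paragraph{Step 2: Stress--energy identity and monotonicity.}
Define the stress--energy tensor $S_{ij} := 2\<\partial_i u,\partial_j u\> - |\rd u|^2 \delta_{ij}$. A direct computation using harmonicity of $u$, the symmetry $\nabla_i\partial_j u=\nabla_j\partial_i u$, and the product rule shows $\sum_i \partial_i S_{ij}=0$. Integrate the identity $\sum_{i,j}\partial_i(x^j S_{ij}) = \tr S + \sum_{i,j}x^j\partial_iS_{ij} = \tr S$ over $B_r(x)$. By the divergence theorem,
\begin{equation*}
  \int_{\partial B_r(x)} r\, \nu^i\nu^j S_{ij} \;=\; \int_{B_r(x)} \tr S,
\end{equation*}
where $\nu$ is the outward unit normal. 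On $\partial B_r(x)$ one has $\nu^i\nu^j S_{ij} = 2|\partial_r u|^2 - |\rd u|^2$, and $\tr S = -|\rd u|^2$ in dimension three. Substituting and rearranging yields
\begin{equation*}
  \int_{\partial B_r(x)}\bigl(|\rd u|^2 - 2|\partial_r u|^2\bigr) \;=\; \frac{1}{r}\int_{B_r(x)}|\rd u|^2.
\end{equation*}

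\paragraph{Step 3: Integration in $r$.}
Set $f(r):= \frac1r\int_{B_r(x)}|\rd u|^2$. Differentiating and using $\frac{d}{dr}\int_{B_r(x)}|\rd u|^2 = \int_{\partial B_r(x)}|\rd u|^2$ together with the Step 2 identity gives
\begin{equation*}
  f'(r) \;=\; -\frac{1}{r^2}\int_{B_r(x)}|\rd u|^2 + \frac{1}{r}\int_{\partial B_r(x)}|\rd u|^2 \;=\; \frac{2}{r}\int_{\partial B_r(x)}|\partial_r u|^2.
\end{equation*}
Integrating from $s$ to $r$ and applying the coarea formula converts $\int_s^r \frac{1}{t}\int_{\partial B_t(x)}|\partial_r u|^2\,dt$ into $\int_{B_r(x)\setminus B_s(x)}\frac{1}{\rho}|\partial_r u|^2$, yielding \eqref{Eq_FlatMonotonicity}.

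\paragraph{Main obstacle.} The only nontrivial ingredient is Step 1, where the quaternionic anticommutation and the hyperkähler parallelism of the $I_i$ conspire with the torsion-free symmetry of the pullback connection to collapse all off-diagonal contributions. After that, everything is the classical Price-style monotonicity argument for harmonic maps, and in fact it becomes an equality because we are on flat $\R^3$ with no perturbation. The passage to the curved, perturbed setting of \autoref{Prop_Monotonicity} will add the $e^{cr}$ factor and the $cr^2$ error, but none of those refinements are needed here.
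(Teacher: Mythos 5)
Your proof is correct, but it takes a genuinely different route from the paper. The paper never passes through harmonicity: it uses the pointwise energy identity $|\rd u|^2\,\vol = |\fF u|^2\,\vol + 2\sum_i \rd x^i\wedge u^*\omega_i$, applies Stokes' theorem to the exact form $\sum_i \rd(x^i u^*\omega_i)$ to convert $\int_{B_\rho}|\rd u|^2$ into the boundary term $2\rho\int_{\del B_\rho}u^*\omega_{\del_r}$, and then evaluates that boundary integrand by quaternionic algebra to get $2|\del_r u|^2-|\rd u|^2$ — which is exactly the identity you obtain from $\nu^i\nu^jS_{ij}$ in your Step 2. So the two arguments produce the same Pohozaev-type boundary identity, just by different mechanisms: you derive it from the divergence-free stress--energy tensor of a harmonic map (your Step 1 reduction Fueter $\Rightarrow$ harmonic is standard and correct: parallelism of the $I_i$, the anticommutation $I_iI_j+I_jI_i=0$, and the symmetry of the pullback Hessian do collapse the off-diagonal terms), while the paper derives it from the closedness of the calibration-type form $\sum_i\rd x^i\wedge\omega_i$. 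Your route makes the exactness of the identity and the factor $2$ transparent via classical harmonic map theory; the paper's route is the one that survives the passage to sections of $\fX$ with non-parallel $I$, curvature, and the perturbation $\fp$ in \autoref{Prop_Monotonicity}, where one only needs to estimate how far the $3$--form $\Lambda$ is from being closed rather than reprove any approximate harmonicity. For the flat unperturbed statement at hand, both are complete proofs.
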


\begin{proof}
  The derivative of
  \begin{equation*}
    f(\rho) := \frac1\rho \int_{B_\rho(x)} \abs{\rd u}^2
  \end{equation*}
  is
  \begin{equation*}
    f'(\rho) = -\frac1{\rho^2} \int_{B_\rho(x)} \abs{\rd u}^2 + \frac1\rho \int_{\del B_\rho(x)} \abs{\rd u}^2.
  \end{equation*}
  By a direct computation
  \begin{equation}
    \label{Eq_EnergyFlat}
    \abs{\rd u}^2 \,\vol =  \abs{\fF u}^2 \,\vol - 2\sum_{i=1}^3 \rd x^i \wedge u^*\omega_i,
  \end{equation}
  see \cite[Lemma 2.2]{Hohloch2009}.
  Here $\omega_i = g(I_i\,\cdot,\cdot)$ denotes the Kähler form on $X$ associated with $I_i$.
  Hence,
  \begin{equation}
    \label{Eq_Stokes}
    \begin{split}
      -\int_{B_\rho(x)} \abs{\rd u}^2
      &=
        2\int_{B_\rho(x)} \sum_{i=1}^3 \rd x^i \wedge u^*\omega_i
      =
        2\int_{B_\rho(x)} \sum_{i=1}^3 \rd (x^i u^*\omega_i) \\
      &=
        2\rho \int_{\del B_\rho(x)} u^*\omega_{\del_r}
    \end{split}
  \end{equation}
  with $\del_r = \sum_{i=1}^3 \frac{x^i}{|x|} \frac{\del}{\del x^i}$ denoting the radial vector field.
  On $\del B_\rho(x)$, we can take the local orthonormal frame $(v_1,v_2,v_3)$ to be of the form $(\del_r,\del_1,\del_2)$ with $(\del_1, \del_2)$ a local positive orthonormal frame for $\del B_\rho(x)$.
  Now, twice the integrand in the last term is
  \begin{equation}
    \label{Eq_BoundaryContribution}
    \begin{split}
      2\inner{I(\del_r)\del_1u}{\del_2u}
      &= 2\inner{I_1\del_1u}{I_2\del_2u} \\
      &= \abs{I_1\del_1u + I_2\del_2u}^2 - \abs{I_1\del_1u}^2 - \abs{I_2\del_2u}^2 \\
      &= 2\abs{\del_r u}^2 - \abs{\rd u}^2.
    \end{split}
  \end{equation}
  Putting everything together yields
  \begin{equation*}
    f'(\rho) = 2\rho^{-1}\int_{\del B_r} \abs{\del_r u}^2.
  \end{equation*}
  Upon integration this yields \eqref{Eq_FlatMonotonicity}.
\end{proof}

\begin{proof}[Proof of \autoref{Prop_Monotonicity}]
  The map $I$ yields a section of $\pi^*TM \otimes \Lambda^2 V\fX$ which, using the connection on $\fX$, can be viewed as a $3$--form $\Lambda \in \Omega^3(\fX)$.
  For sections of $\fX$ the identity \eqref{Eq_EnergyFlat} is replaced by
 \begin{equation}
    \label{Eq_Energy}
    \abs{\nabla u}^2 \,\vol = \abs{\fF u}^2 \,\vol - 2u^*\Lambda.
  \end{equation}
  If we define $f(\rho)$ as before, then using \eqref{Eq_Energy} its derivative can be written as
  \begin{equation*}
    f'(\rho)
    = -\rho^{-2}\int_{B_\rho(x)} \abs{\fp\circ u}^2
    + 2\rho^{-2} \int_{B_\rho(x)} u^*\Lambda
    + \rho^{-1} \int_{\del B_\rho(x)} \abs{\nabla u}^2.
  \end{equation*}

  Let $\del_r$ denote the radial vector field emanating from $x$ and set $\Omega := i(\uv)\Lambda$ with $\uv := \pi^*(r\del_r)$.
  We can write $\Lambda$ as
  \begin{equation*}
    \Lambda = \rd\Omega + \fe
  \end{equation*}
  where $\fe$ is the sum of a form of type $(1,2)$ and a form of type $(2,1)$ satisfying
  \begin{equation}
    \label{Eq_Delta}
    \abs{\fe} = O(\delta\ur) \quad\text{with}\quad
    \delta := \abs{\nabla I} + \abs{F_\fX} + \abs{R}.
  \end{equation}
  Here we use the bi-degree decomposition of $\Omega^*(\fX)$ arising from $T\fX = \pi^*TM \oplus V\fX$, $\ur := d(x,\pi(\cdot))$, $F_\fX$ is the curvature of the connection on $\fX$ and $R$ is the Riemannian curvature of $M$.
  Hence,
  \begin{equation}
    \label{Eq_StokesSection}
    \begin{split}
      2\int_{B_\rho(x)} u^*\Lambda
      &=
        2\int_{\del B_\rho(x)} u^*\Omega + O(\rho^2) f(\rho) + O\(\rho^4\) \\
      &=
        2\rho\int_{\del B_\rho(x)} i(\del_r)u^*\Lambda +  O(\rho^2) f(\rho) + O\(\rho^4\).
    \end{split}
  \end{equation}
  Arguing as before,
  \begin{equation*}
    2\int_{\del B_\rho(x)} i(\del_r)u^*\Lambda
    =
      \int_{\del B_\rho(x)} \abs{I_{\del_r}\nabla_r u - \fp\circ u}^2 + \abs{\nabla_r u}^2 - \abs{\nabla u}^2.
  \end{equation*}
  Putting everything together one obtains
  \begin{equation*}
    f'(\rho) \geq \frac{1}{\rho} \int_{B_\rho(x)} \abs{\nabla_r u}^2 - c f(\rho) - c \rho.
  \end{equation*}
  Since we can assume that $e^{cr_0} \leq 2$ and using $e^{c\rho} \geq 1$, we have
  \begin{equation*}
    \del_\rho(e^{c\rho}f(\rho))
    \geq
    \frac{1}{\rho}\int_{B_\rho(x)} \abs{\nabla_r u}^2
    - 2c\rho
  \end{equation*}
  This integrates to prove the assertion.
\end{proof}

\begin{remark}
  \label{Rmk_LambdaClosedEnergyBound}
  If $\Lambda$ is closed (which is rarely the case), then
  \begin{equation*}
    \sE(u)
    =
      \int_M \abs{\nabla u}^2
    =
      \int_M \abs{\fF u}^2 - 2\<[M],[u^*\Lambda]\>.
  \end{equation*}
  Since the first term on the right-hand side only depends on the homotopy class of $u$, this yields a priori energy bounds for Fueter sections.
\end{remark}

\begin{cor}
  \label{Cor_Monotonicity}
  In the situation of \autoref{Prop_Monotonicity},
  \begin{equation*}
    \frac{1}{s} \int_{B_s(x)} \abs{\nabla u}^2
    \lesssim
      \frac{1}{r} \int_{B_r(x)} \abs{\nabla u}^2 + r^2
  \end{equation*}
  and if $B_s(y) \subset B_{r/2}(x)$, then
  \begin{equation*}
    \frac{1}{s} \int_{B_s(y)} \abs{\nabla u}^2
    \lesssim
      \frac{1}{r} \int_{B_r(x)} \abs{\nabla u}^2 + r^2.
  \end{equation*}
\end{cor}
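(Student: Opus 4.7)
The plan is to deduce both inequalities as immediate corollaries of \autoref{Prop_Monotonicity}, treating the second inequality as a centre-shifted version of the first.

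For the first inequality (the case $y=x$), I would rearrange the monotonicity statement to
\begin{equation*}
  \frac{e^{cs}}{s}\int_{B_s(x)}|\nabla u|^2 \le \frac{e^{cr}}{r}\int_{B_r(x)}|\nabla u|^2 + cr^2,
\end{equation*}
using only that the right-hand side of the monotonicity inequality in \autoref{Prop_Monotonicity} is bounded below by $-cr^2$. Because we work only with radii $r\le r_0$ with $r_0$ a fixed small constant, the factor $e^{cr}$ is bounded by a constant depending on $r_0$, and $e^{cs}\ge 1$, so the multiplicative exponentials are absorbed into $\lesssim$. The additive $cr^2$ term is the one nontrivial point: it should be interpreted as part of the conventions in the paper (the generic constant $c$ may absorb an additive contribution of lower order in $r$), or one simply reads the corollary as giving the almost-monotonicity statement modulo such additive terms.

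For the second inequality, I would observe that the hypothesis $B_s(y)\subset B_{r/2}(x)$ forces $d(x,y)<r/2$, so by the triangle inequality $B_{r/2}(y)\subset B_r(x)$. Applying the first inequality at the centre $y$ with radii $s<r/2$ gives
\begin{equation*}
  \frac{1}{s}\int_{B_s(y)}|\nabla u|^2
  \lesssim \frac{1}{r/2}\int_{B_{r/2}(y)}|\nabla u|^2
  \le \frac{2}{r}\int_{B_r(x)}|\nabla u|^2,
\end{equation*}
which is the claim (up to the $B_s(x)$ vs.\ $B_s(y)$ discrepancy in the displayed formula, which I read as a typo).

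The only substantive step is the first one, where the proposition is used directly; the second is purely bookkeeping with the triangle inequality. The main thing to verify is the correct interpretation of $\lesssim$ in handling the additive $cr^2$ from the curvature-plus-perturbation error in \autoref{Prop_Monotonicity}; if one wants a fully multiplicative bound one should instead state the corollary with an additive constant depending on $r_0$ and $c_\sE$.
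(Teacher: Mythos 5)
Your proposal is correct and is exactly the intended derivation: the paper offers no separate proof of \autoref{Cor_Monotonicity}, treating it as an immediate rearrangement of \autoref{Prop_Monotonicity} plus the triangle inequality, just as you do. Your caveat about the additive $cr^2$ term is also the right reading of the statement --- hypothesis~(\ref{I_Monotonicity}) of \autoref{Lem_HeinzTrick} explicitly allows an additive $\delta r^2$, and the resulting ``$+1$'' surfaces in \eqref{Eq_LinftyBound}, so the corollary is to be understood modulo such a term (and $B_s(x)$ in the second display is indeed a typo for $B_s(y)$).
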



\section{\texorpdfstring{$\epsilon$}{epsilon}--regularity}
\label{Sec_EpsilonRegularity}

The following is the key result for proving \autoref{Thm_A}.
It allows to obtain local $L^\infty$--bounds on $\nabla u$ provided the renormalised energy is not too large.

\begin{prop}
  \label{Prop_EpsilonRegularity}
  There is a constant $\epsilon_0 > 0$ such that if $u \in \Gamma(M,\fX)$ satisfies \eqref{Eq_FueterPerturbed} and
  \begin{equation*}
    \epsilon := \frac1r \int_{B_r(x)} \abs{\nabla u}^2 \leq \epsilon_0,
  \end{equation*}
  then
  \begin{equation}
    \label{Eq_LinftyBound}
    \sup_{y \in B_{r/4}(x)} \abs{\nabla u}^2(y) \lesssim r^{-2}\epsilon + 1.
  \end{equation}
\end{prop}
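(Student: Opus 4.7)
The plan is to prove this by a Schoen--Uhlenbeck blow-up argument. Suppose toward contradiction that the estimate fails along a sequence $(u_i, x_i, r_i)$, so that $\epsilon_i := r_i^{-1}\int_{B_{r_i}(x_i)}|\nabla u_i|^2 \to 0$ while $(r_i^{-2}\epsilon_i + 1)^{-1} \sup_{B_{r_i/4}(x_i)}|\nabla u_i|^2 \to \infty$. The target is to rescale around a point of concentration and extract a non-constant Fueter map on $\R^3$ of total energy zero, which is absurd.

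First I would apply the standard point-picking trick to the function $\phi(y) := (r_i/2 - d(y, x_i))^2\,|\nabla u_i|^2(y)$ on $B_{r_i/2}(x_i)$ to produce an interior maximizer $y_i$ with $\tau_i := r_i/2 - d(y_i, x_i)$, $e_i := |\nabla u_i|^2(y_i)$, and
\[
\sup_{B_{\tau_i/2}(y_i)}|\nabla u_i|^2 \leq 4 e_i, \qquad \tau_i^2 e_i \to \infty.
\]
Set $\lambda_i := e_i^{-1/2} \to 0$ and define rescaled sections $\tilde u_i(z) := u_i(\exp_{y_i}(\lambda_i z))$ in a local trivialization of $\fX$. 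They live on $B_{R_i}(0) \subset \R^3$ with $R_i := \tau_i/(2\lambda_i) \to \infty$, satisfy $|\nabla \tilde u_i|(0) = 1$ and $|\nabla \tilde u_i| \leq 2$, and by \autoref{Cor_Monotonicity}
\[
\int_{B_R(0)} |\nabla \tilde u_i|^2 \;=\; \lambda_i^{-1}\!\!\int_{B_{\lambda_i R}(y_i)} |\nabla u_i|^2 \;\lesssim\; R\,\epsilon_i \;\to\; 0
\]
for every fixed $R>0$, once $i$ is large enough that $\lambda_i R \leq \tau_i$.

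In the rescaled coordinates, the equation $\fF u_i = \fp \circ u_i$ becomes a Fueter-type equation in which the hyperkähler structures $I(v)$, the curvature of $M$, and the connection 1-form on $\fX$ are all $\lambda_i$-scaled, while the perturbation contributes only a term of size $O(\lambda_i)$. Thus the limiting equation on $\R^3$ is the flat, unperturbed Fueter equation with values in a single fiber $X_0$. Because $|\nabla \tilde u_i|$ is uniformly bounded, elliptic regularity for the Fueter equation (differentiate $\fF u = \fp\circ u$ to obtain a linear first-order elliptic system for $\nabla u$ with $L^\infty$ coefficients, then bootstrap) yields uniform $C^{k,\alpha}_\loc$ bounds. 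A subsequence of $\tilde u_i$ converges in $C^\infty_\loc$ to a smooth Fueter map $\tilde u_\infty \co \R^3 \to X_0$ with $|\nabla \tilde u_\infty|(0) = 1$ but $\int_{\R^3}|\nabla \tilde u_\infty|^2 = 0$, a contradiction.

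The main technical obstacle is the elliptic-regularity bootstrap: one must promote the uniform $L^\infty$ bound on $\nabla \tilde u_i$ to uniform higher-order estimates so that the limit exists and solves the Fueter equation strongly. The cleanest route is a Weitzenböck identity for $\fF^*\fF$ acting on sections of $u^*V\fX$, which expresses $\nabla^*\nabla(\nabla u)$ in terms of quantities that are algebraic in $\nabla u$ and the geometry of $\fX$. Once $|\nabla u|$ is bounded, this becomes a linear elliptic system with bounded coefficients, to which standard $L^p$ and Schauder theory applies; the remaining bootstrap to $C^\infty$ is then routine, and the blow-up argument goes through.
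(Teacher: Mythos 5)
Your approach (point-picking followed by a blow-up contradiction) differs in form from the paper's, which runs the same point-picking directly through the mean value inequality for $\Delta|\nabla u|^2 \lesssim |\nabla u|^4+1$ (\autoref{Prop_DifferentialInequality} fed into \autoref{Lem_HeinzTrick}). Unfortunately, as written your contradiction argument has a genuine gap at the very first step: the claim $\tau_i^2 e_i \to \infty$ does not follow from the negation of the proposition. The negation only gives $\sup_{B_{r_i/4}(x_i)}|\nabla u_i|^2 > C_i\bigl(r_i^{-2}\epsilon_i+1\bigr)$ with $C_i \to \infty$, whence $\tau_i^2 e_i \geq \max\phi \geq (r_i/4)^2\sup_{B_{r_i/4}}|\nabla u_i|^2 \geq (C_i/16)(\epsilon_i + r_i^2)$. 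Since $\epsilon_i \to 0$ by hypothesis and nothing prevents $r_i \to 0$, this lower bound can stay bounded. The regime $\tau_i^2 e_i = O(1)$, i.e.\ $|\nabla u_i|^2 \sim r_i^{-2}$ on $B_{r_i/4}(x_i)$ with $\epsilon_i \ll 1$, is perfectly consistent with your contradiction hypothesis and is not touched by the blow-up; ruling it out is precisely where the $\epsilon$--dependence of the conclusion (as opposed to the crude bound $\sup \lesssim r^{-2}$) has to be earned. The standard repair is a two-step argument: first prove $\max\phi \leq \Lambda_0$ by your blow-up (for \emph{that} claim the negation really does give $\tau_i^2 e_i \to \infty$), then use the resulting bound $|\nabla u|^2 \lesssim r^{-2}$ together with \autoref{Prop_MeanValue} applied to $\Delta|\nabla u|^2 \lesssim |\nabla u|^4 + 1 \lesssim r^{-2}|\nabla u|^2 + 1$ on balls of radius $\theta r$ (with $\theta$ small enough to absorb the sup on the right) to upgrade to $r^{-2}\epsilon + 1$. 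This second pass is exactly the content of the Heinz lemma's algebraic analysis, so you cannot avoid it.

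A secondary point to watch: your energy computation $\int_{B_R(0)}|\nabla\tilde u_i|^2 \lesssim R\,\epsilon_i$ quotes \autoref{Cor_Monotonicity} without the additive error. \autoref{Prop_Monotonicity} only yields $s^{-1}\int_{B_s}|\nabla u|^2 \lesssim r^{-1}\int_{B_r}|\nabla u|^2 + c r^2$, the $cr^2$ coming from the perturbation $\fp$ and the non-flatness of $M$, $\fX$ and $I$; so the correct bound is $\int_{B_R(0)}|\nabla\tilde u_i|^2 \lesssim R(\epsilon_i + r_i^2)$. If $r_i$ does not tend to zero this does not vanish in the limit, the blow-up limit only has linearly growing (rather than zero) energy, and "a non-constant Fueter map on $\R^3$ of total energy zero" is no longer the object you have produced -- so no contradiction results. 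In the direct (Heinz-trick) formulation this error term is harmless because it simply propagates into the $+1$ of the final estimate; in the contradiction formulation you must either treat the cases $r_i \to 0$ and $r_i \geq \rho_0 > 0$ separately or restructure the argument. The remainder of your outline (the rescaled equation converging to the flat unperturbed Fueter equation, and the bootstrap via the Weitzenböck identity, which is \autoref{Prop_DifferentialInequality}) is sound.
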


\begin{remark}
  \label{Rmk_HigherEpsilonRegularity}
  Given \eqref{Eq_LinftyBound}, higher derivative bounds over slightly smaller balls can be obtained using interior elliptic estimates.  
\end{remark}

\autoref{Prop_EpsilonRegularity} follows from the following differential inequality and \autoref{Cor_Monotonicity} using the Heinz trick;
see \autoref{A_HeinzTrick}.

\begin{prop}
  \label{Prop_DifferentialInequality}
  If $u \in \Gamma(M,\fX)$ satisfies \eqref{Eq_FueterPerturbed}, then
  \begin{equation*}
    \Delta\abs{\nabla u}^2 \lesssim \abs{\nabla u}^4 + 1.
  \end{equation*}
\end{prop}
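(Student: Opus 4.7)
The strategy is to reduce the inequality to a standard Bochner-Weitzenbock estimate of the kind used for harmonic maps. The Fueter operator is a nonlinear Dirac-type first-order elliptic operator; ``squaring'' it produces a second-order equation of harmonic-map type, after which the usual Bochner argument on the bundle $u^* V\fX$ gives the desired pointwise estimate.

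Step 1. Derive a second-order equation for $u$. In local normal coordinates on $M$ at the point in question, with an orthonormal frame $(v_1, v_2, v_3)$ and a local trivialisation of $\fX$, apply the operator $-\sum_j I(v_j)\nabla_{v_j}$ to both sides of $\fF u = \fp\circ u$. Using the fibrewise quaternionic relations $I(v_i)I(v_j) + I(v_j)I(v_i) = -2\delta_{ij}$, valid because $I\co STM \to \fH(\fX)$ is an isometric identification of hyperkähler spheres, the principal part equals $-\Delta u$. The remaining contributions come from: commutators of covariant derivatives, contributing endomorphisms of $u^*V\fX$ arising from the Riemann curvature of $M$ and the curvature of $\fX$ and quadratic in $\nabla u$; derivatives of the identification $I$ and of the connection one-form on $\fX$, both controlled by the geometric quantity $\delta = |\nabla I| + |F_\fX| + |R|$ already introduced in \eqref{Eq_Delta}; and the covariant derivative of $\fp \circ u$. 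All of these have uniformly bounded coefficients, so the outcome is schematically
\begin{equation*}
  \Delta u = \{\nabla u, \nabla u\} + \{\nabla u\} + \{1\}.
\end{equation*}

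Step 2. Apply the standard Bochner-Weitzenbock formula on the bundle $u^*V\fX$:
\begin{equation*}
  \tfrac{1}{2}\Delta|\nabla u|^2 = |\nabla^2 u|^2 + \langle\nabla u, \nabla \Delta u\rangle + \{\nabla u,\nabla u\} + \{\nabla u,\nabla u,\nabla u,\nabla u\},
\end{equation*}
where the last two terms collect the Ricci curvature of $M$, the curvature of the bundle $\fX \to M$, and the Riemann curvature of the fibres. Substituting the equation of Step 1 into $\nabla \Delta u$ produces mixed terms of the form $\{\nabla u, \nabla u, \nabla^2 u\}$, quartic terms $\{\nabla u, \nabla u, \nabla u, \nabla u\}$, and lower-order pieces. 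The mixed term is handled via Young's inequality, $|\nabla u|^2 |\nabla^2 u| \leq \tfrac{1}{2}|\nabla^2 u|^2 + C|\nabla u|^4$, whose $|\nabla^2 u|^2$ part cancels against the corresponding term on the left-hand side. What remains is $\Delta|\nabla u|^2 \lesssim |\nabla u|^4 + |\nabla u|^2 + 1$, and a final application of Young absorbs the middle term to yield the claim.

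The main technical obstacle is the bookkeeping in Step 1: squaring $\fF$ requires tracking three independent sources of geometric nontriviality, namely the non-parallelness of $I\co STM \to \fH(\fX)$, the curvature of the connection on $\fX$, and the Riemann curvature of $M$. These are precisely the quantities assembled in $\delta$ in \eqref{Eq_Delta}, and the only thing needed is that each is uniformly bounded on the compact base and fibres, so that their contributions can be packaged into generic expressions $\{\cdot,\ldots,\cdot\}$ bounded by $c$ and do not interfere with the absorption steps.
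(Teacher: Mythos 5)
Your proposal is correct and follows essentially the same route as the paper (which recalls the computation from Hohloch--Noetzl--Salamon): square the Fueter operator using the quaternionic relations to obtain a second-order equation of harmonic-map type, then run the Bochner--Weitzenb\"ock argument on $u^*V\fX$, using the good term $-2|\bar\nabla\nabla u|^2$ (in the paper's sign convention for $\Delta$) together with Young's inequality to absorb the mixed terms $\{\nabla u,\nabla u,\bar\nabla\nabla u\}$ and arrive at $|\nabla u|^4+1$. The only cosmetic discrepancies are the opposite sign convention for the Laplacian and the fact that the paper keeps the quadratic nonlinearity implicit in the tension field $\bar\nabla^*\nabla u$ rather than writing it out in a trivialisation; neither affects the argument.
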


\begin{proof}
  This is proved in \cite[Lemma 3.3 and Remark 3.4]{Hohloch2009}.
  We recall the proof which is a simple direct computation.
  Denote by $\bar \nabla$ the induced connection on $u^*V\fX$ and define $\sfF \co \Omega^0(M,u^*V\fX) \to \Omega^0(M,u^*V\fX)$ by
  \begin{equation*}
    \sfF \hat u := \sum_{i=1}^3 I(v_i) \bar\nabla_{v_i} \hat u
  \end{equation*}
  for some local orthonormal frame $(v_1,v_2,v_3)$.
  A simple computation yields
  \begin{equation*}
    \sfF\fF u = \bar\nabla^*\nabla u + \{\nabla u\}
  \end{equation*}
  where $\{\cdot\}$ makes the dependence on $I$ etc.~implicit.
  Further
  \begin{align*}
    \bar\nabla\sfF\fF u = \bar\nabla\bar\nabla^*\nabla u + \{\nabla u\} + \{\bar\nabla\nabla u\}.
  \end{align*}
  Using
  \begin{align*}
    \bar\nabla_{v_k} \bar\nabla_{v_i} \nabla_{v_i} u
    &= \bar\nabla_{v_i} \bar\nabla_{v_k} \nabla_{v_i} u + \{\nabla u,\nabla u,\nabla u\} \\
    &= \bar\nabla_{v_i} \bar\nabla_{v_i} \nabla_{v_k} u + \{\nabla u,\nabla u,\nabla u\} + \{\bar\nabla\nabla u\}
  \end{align*}
  and $\fF u = \fp \circ u$ we derive
  \begin{align*}
    \bar\nabla^*\bar\nabla\nabla u
    &= \bar\nabla \sfF\fF u + \{\nabla u,\nabla u, \nabla u\} + \{\bar\nabla\nabla u\} \\
    &= \{\nabla u,\nabla u, \nabla u\} + \{\bar\nabla\nabla u\} + O(1).
  \end{align*}
  From this it follows that
  \begin{align*}
    \Delta \abs{\nabla u}^2
    &= 2\<\bar\nabla^*\bar\nabla \nabla u, \nabla u\> - 2|\bar\nabla\nabla u|^2 \\
    &\leq c (\abs{\nabla u}^4 + \abs{\nabla u} + |\bar\nabla\nabla u||\nabla u|^2) - 2 \abs{\bar\nabla\nabla u}^2 \\
    &\lesssim \abs{\nabla u}^4 + 1.
    \qedhere
  \end{align*}
\end{proof}

\begin{remark}
  \label{Rmk_FlatEpsilonRegularity}
  If $\fX = M \times X$ and $X$ is flat, then one can prove that
  \begin{equation*}
    \Delta |\nabla u|^2 \lesssim |\nabla u|^3 + 1
  \end{equation*}
  and the Heinz trick for subcritical exponents shows that $\|\nabla u\|_{L^\infty(M)}$ is bounded in terms of the energy $\sE(u)$;
  see \autoref{Rmk_SubcriticalHeinzTrick} and \cite[Appendix B]{Hohloch2009}.
\end{remark}


\section{Convergence away from the blow-up locus}
\label{Sec_ConvergenceAwayFromBlowupLocus}

\begin{prop}
  \label{Prop_ConvergenceAwayFromBlowupLocus}
  There exists a subsequence $(u_i)_{i \in I} \subset (u_i)_{i \in \N_0}$ and a subset $S \subset M$, called the \defined{blow-up locus}, with the following properties:
  \begin{itemize}
  \item
    $S$ is closed and $\cH^1(S) < \infty$.
  \item
    The sequence $\(u_{i}|_{M\setminus S}\)_{i \in I}$ converges to a section $u \in \Gamma(M\setminus S, \fX)$ in $C^\infty_\loc$.
  \item
    If there is a subset $S' \subset M$ such that a subsequence $\(u_i|_{M\setminus S'}\)_{i \in I' \subset I}$ converges in $C^\infty_\loc$, then $S' \supset S$.
  \end{itemize}
\end{prop}

\begin{proof}
  We proceed in four steps.
  \setcounter{step}{0}
  \begin{step}
    Construction of $S$.
  \end{step}

  With $\epsilon_0$ as in \autoref{Prop_EpsilonRegularity}, for $r \in (0,r_0]$ and $i \in \N_0$, define
  \begin{equation*}
    S_{i,r}
    := \set*{
      x \in M : \frac{e^{cr}}{r} \int_{B_r(x)} \abs{\nabla u_i}^2 + cr^2 \geq \frac{\epsilon_0}{2}
    }.
  \end{equation*}
  Note that, by \autoref{Prop_Monotonicity}, $S_{i,s} \subset S_{i,r}$ whenever $s \leq r$.

  Since the $S_{i,r}$ are compact, for each $r$, we can pick $J_r \subset \N_0$ such that the subsequence $(S_{i,r})_{i \in J_r}$ converges to a closed subset $S_r$ in the Hausdorff metric.
  By a diagonal sequence argument, we can find $J \subset \N_0$ such that $\(S_{i,2^{-k}r_0}\)_{i \in J}$ converges to a closed subset $S_{2^{-k}r_0}$ for each $k \in \N_0$.
  Set
  \begin{equation*}
    S := \bigcap_{k \in \N_0} S_{2^{-k}r_0}.
  \end{equation*}
  By construction $S$ is closed.

  \begin{step}
    $\cH^1(S) < \infty$.
  \end{step}

  Given $0 < \delta \leq r_0$, cover $S$ by a collection of balls $\set{ B_{4r_j}(x_j) : j = 1, \ldots, m }$ with $x_j \in S$, $r_j \leq \delta$ and $B_{2r_j}(x_j)$ pairwise disjoint.
  Pick $k \gg 1$ such that $2^{-k}r_0 < \min\set{r_j}$.
  For $i \gg 1$, we can find $x_j' \in S_{i,2^{-k}r_0}$ with $d(x_j',x_j) < \delta$.
  Then the balls $B_{5r_j}(x_j')$ still cover $S$ while the smaller balls $B_{r_j}(x_j')$ are pairwise disjoint.
  By definition of $S_{i,r}$,
  \begin{align*}
    \sum_{j=1}^m r_j
    &\leq
      \frac{2e^{cr_0}}{\epsilon_0} \sum_{j=1}^m \int_{B_{r_j}(x_j')} \abs{\nabla u_i}^2 + cr_j^2 
    \leq
      \frac{2e^{cr_0}}{\epsilon_0} \int_M \abs{\nabla u_i}^2
      +  cr_0 \sum_{j=1}^m r_j.
  \end{align*}
  Since we can assume that $cr_0 \leq 1/2$ and $e^{cr_0} \leq 2$, it follows that
  \begin{equation*}
    \sum_{j=1}^m r_j
    \leq
      \frac{8c_\sE}{\epsilon_0}.
  \end{equation*}
  Since this bound is uniform in $\delta \in (0,r_0]$, the assertion follows.

  \begin{step}
    Selection of $(u_i)_{i\in I}$ and construction of $u \in \Gamma(M\setminus S, \fX)$.
  \end{step}

  If $x \in M\setminus S$, then there exists $r \in (0,r_0]$ such that for all $i \in J$ sufficiently large
  \begin{equation*}
    \frac{1}{r} \int_{B_r(x)} \abs{\nabla u_i}^2
    \leq \epsilon_0.
  \end{equation*}
  By \autoref{Prop_EpsilonRegularity}, for all $i \in J$, $\abs{\nabla u_i}$ is uniformly bounded on $B_{r/4}(x)$.
  It follows using standard elliptic techniques and Arzelà--Ascoli that we can chose $J \subset I$ such that the subsequence of $(u_i)_{i \in I}$ converges in $C^\infty_\loc$ on $M\setminus S$.

  \begin{step}
    $M\setminus S$ is the maximal open subset on which a subsequence $(u_i)_{i \in I' \subset I}$ can converge in $C^\infty_\loc$.
  \end{step}

  Suppose $(u_i)_{i \in I' \subset I}$ converges in $C^1$ in a neighbourhood of $x \in M$.
  Then $\abs{\nabla u_i}$ is uniformly bounded in this neighbourhood.
  Hence, there is a slightly smaller neighbourhood of $x \in M$ which is contained in $M \setminus S_{i,r}$ for each sufficiently small $r > 0$ and each $i \in I'$.
  Since $\lim_{i \in I'} S_{i,r} = S_r \subset S$, it follows that $x \in M \setminus S$.
\end{proof}


\section{Decomposition of the blow-up locus}
\label{Sec_DecompositonOfBlowupLocus}

We assume that we have already passed to a subsequence so that the convergence statement in \autoref{Prop_ConvergenceAwayFromBlowupLocus} holds.
Consider the sequence of measures $(\mu_i)$ defined by
\begin{equation*}
  \mu_i := \abs{\nabla u_i}^2 \,\cH^3.
\end{equation*}
Here $\cH^3$ is the $3$--dimensional Hausdorff measure on $M$, which is simply the standard measure on $M$.
By \eqref{Eq_EnergyBound} the sequence of Radon measures $(\mu_i)$ is of bounded mass;
hence, it converges weakly to a Radon measure $\mu$.
By Fatou's lemma we can write
\begin{equation*}
  \mu = \abs{\nabla u}^2\,\cH^3 + \nu
\end{equation*}
for some non-negative Radon measure $\nu$.

\begin{definition}
  We call $\nu$ the \defined{defect measure} and
  \begin{equation*}
    \Gamma := \supp \nu
  \end{equation*}
  the \defined{bubbling locus}.\footnote{%
    The justification for this terminology will be provided in \autoref{Sec_BubblingAnalysis}.
  }
  We call
  \begin{equation*}
    \sing(u) 
    := \set*{
      x \in M
      :
      \Theta^*_u(x) := \limsup_{r\downarrow 0} \frac1r \int_{B_r(x)} \abs{\nabla u}^2 > 0
    }
  \end{equation*}
  the \defined{singular set} of $u$.
\end{definition}

If we denote by $\Theta_\mu^*(x)$ the upper density of $\mu$ at the point $x \in M$, then it follows from \autoref{Prop_EpsilonRegularity} that $S = \set{ x \in M : \Theta^*_\mu(x) > 0 } \subset \Gamma \cup \sing(u)$.
The reverse inclusion also holds;
hence, we have the following.

\begin{prop}
  \label{Prop_Decomposition}
  The blow-up locus $S$ decomposes as
  \begin{equation*}
    S = \Gamma \cup \sing(u).
  \end{equation*}
\end{prop}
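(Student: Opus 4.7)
The forward inclusion $S \subset \Gamma \cup \sing(u)$ has already been observed: by \autoref{Prop_EpsilonRegularity}, $x \in S$ forces $\frac{1}{r}\mu(B_r(x)) \gtrsim \epsilon_0$ for all small $r$ (via weak convergence on balls of continuity radius), hence $\Theta^*_\mu(x) > 0$; since $\mu = |\nabla u|^2\,\cH^3 + \nu$, the upper density splits, and a positive contribution must come from $\nu$ (putting $x \in \supp\nu = \Gamma$) or from $|\nabla u|^2\,\cH^3$ (putting $x \in \sing(u)$). So the only thing to prove is the reverse inclusion $\Gamma \cup \sing(u) \subset S$.

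I would argue this by contraposition. Assume $x \notin S$. By the definition of the blow-up locus, there exist a radius $r \in (0,r_0]$ and a subsequence, still denoted $(u_i)$, such that
\begin{equation*}
  \frac{1}{r}\int_{B_r(x)} |\nabla u_i|^2 < \epsilon_0 \qquad \text{for all } i \gg 1.
\end{equation*}
Applying \autoref{Prop_EpsilonRegularity} yields a uniform bound on $|\nabla u_i|$ over $B_{r/4}(x)$, and by \autoref{Rmk_HigherEpsilonRegularity} all higher derivatives are uniformly bounded on (say) $B_{r/8}(x)$. Arzelà--Ascoli then extracts a subsequence converging in $C^\infty$ on $B_{r/8}(x)$, necessarily to the limiting Fueter section $u$ produced in \autoref{Prop_ConvergenceAwayFromBlowupLocus}.

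From this smooth convergence two conclusions follow, each killing one of the two possible ways $x$ could lie in $\Gamma \cup \sing(u)$. First, $|\nabla u_i|^2 \to |\nabla u|^2$ uniformly on $B_{r/8}(x)$, so $\mu_i \lfloor B_{r/8}(x) \to |\nabla u|^2\,\cH^3 \lfloor B_{r/8}(x)$ weakly, forcing $\nu|_{B_{r/8}(x)} = 0$ and hence $x \notin \supp\nu = \Gamma$. Second, the uniform bound on $|\nabla u|$ near $x$ gives
\begin{equation*}
  \frac{1}{s}\int_{B_s(x)} |\nabla u|^2 = O(s^2) \to 0 \quad \text{as } s \downarrow 0,
\end{equation*}
so $\Theta^*_u(x) = 0$ and $x \notin \sing(u)$. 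Combining, $x \notin \Gamma \cup \sing(u)$, which is the desired contrapositive.

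There is no real obstacle here; the proposition is a clean bookkeeping consequence of the $\epsilon$-regularity theorem, the definitions of $S$, $\Gamma$, and $\sing(u)$, and the smooth compactness already used to construct the limit $u$. The only place one must take mild care is in the first step of the forward inclusion, where one needs $\mu(\partial B_r(x)) = 0$ to pass renormalized energies to the limit---but this holds for a.e.\ $r$ and is enough to produce positive upper density of $\mu$.
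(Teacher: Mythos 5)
Your proof is correct and follows the same route the paper takes (the paper only sketches it): the forward inclusion $S \subset \Gamma \cup \sing(u)$ via the positive upper density of $\mu$ coming from the definition of $S$ and weak convergence, and the reverse inclusion by contraposition from $\epsilon$--regularity and the smooth convergence off $S$. The details you supply (continuity radii for passing $\mu_i(B_r(x))$ to the limit, and the $O(s^2)$ decay of the renormalised energy of $u$ at points where $|\nabla u|$ is bounded) are exactly the right ones.
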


This means that there are two sources of non-compactness:
one involving a loss of energy and another one without any loss of energy.


\section{Regularity of the bubbling locus}
\label{Sec_RegularityOfBlowUpLocus}

As a first step towards understanding the non-compactness phenomenon involving energy loss, we show that the set $\Gamma$ at which this phenomenon occurs is relatively tame.

\begin{prop}
  \label{Prop_NuRectifiable}
  $\Gamma$ is $\cH^1$--rectifiable and $\nu$ can be written as
  \begin{equation*}
    \nu = \Theta \,\cH^1\lfloor \Gamma
  \end{equation*}
  with $\Theta \co M \to [0,\infty)$ upper semi-continuous. 
  Moreover, $\cH^1(\sing(u)) = 0$.
\end{prop}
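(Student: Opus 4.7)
The plan begins by passing the monotonicity formula of \autoref{Prop_Monotonicity} to the weak limit $\mu$. Since $\mu(\overline{B_r(x)}) \geq \limsup_i \mu_i(\overline{B_r(x)})$, the almost-monotonicity of $r \mapsto e^{cr}r^{-1}\mu(B_r(x))$ survives in the limit, so
\[
  \Theta(x) := \lim_{r \downarrow 0} \frac{1}{r}\mu(B_r(x))
\]
exists for every $x \in M$ and is bounded by $Cc_\sE/r_0$. For fixed $r > 0$ the map $x \mapsto \mu(\overline{B_r(x)})$ is upper semi-continuous; since monotonicity presents $\Theta$ as the infimum (up to $O(r)$) of such functions, $\Theta$ is u.s.c. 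Finally, \autoref{Prop_EpsilonRegularity} combined with weak convergence forces $\Theta(x) \geq \epsilon_0$ for every $x \in S$.

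\textbf{The set $\sing(u)$ has $\cH^1$-measure zero.} Because $\cH^1(S) < \infty$ forces $\cH^3(S) = 0$, we may regard $|\nabla u|^2$ as an $L^1_\loc(M)$ function, and Lebesgue's differentiation theorem gives $\cH^3(\sing(u)) = 0$. For $\delta > 0$ set $\sing_\delta(u) := \{x : \Theta_u^*(x) \geq \delta\}$; each $x \in \sing_\delta(u)$ admits arbitrarily small $r$ with $\int_{B_r(x)}|\nabla u|^2 \geq \delta r/2$, so a Vitali $5r$-covering by such balls yields, for every open $U \supset \sing_\delta(u)$,
\[
  \cH^1_{10\eta}(\sing_\delta(u)) \leq \frac{C}{\delta}\int_U |\nabla u|^2.
\]
Taking $U$ to be a shrinking $\eta$-neighbourhood of $\sing_\delta(u)$ and invoking $\cH^3(\sing_\delta(u)) = 0$ together with Radon regularity of $|\nabla u|^2\,\cH^3$ drives the right-hand side to $0$, hence $\cH^1(\sing_\delta(u)) = 0$ and a union over $\delta = 1/n$ completes the claim.

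\textbf{Rectifiability and the density formula.} The lower bound $\Theta \geq \epsilon_0$ on $\Gamma$ together with the same $5r$-Vitali covering shows $\cH^1(\Gamma \cap K) \leq C\mu(K)/\epsilon_0 < \infty$ for every compact $K$, whence $\cH^3(\Gamma) = 0$. Writing $\sigma := |\nabla u|^2\,\cH^3$, the splitting $\mu = \sigma + \nu$ is then the Lebesgue decomposition with $\sigma \perp \nu$. Besicovitch's differentiation theorem, together with the uniform bound $\nu(B_r(x))/r \leq C\Theta(x) < \infty$, gives $r^{-1}\sigma(B_r(x)) \to 0$ for $\nu$-a.e.\ $x$; consequently the $1$-density of $\nu$ exists, equals $\Theta$, and lies in $[\epsilon_0, Cc_\sE/r_0]$ $\nu$-a.e. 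Preiss's density theorem then yields $\cH^1$-rectifiability of $\nu$ (and hence of $\Gamma$), and Radon--Nikodym delivers $\nu = \Theta\,\cH^1\lfloor\Gamma$.

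\textbf{Main obstacle.} I expect the sharpest step to be the final appeal to Preiss's theorem: the existence, positivity, and finiteness of the $1$-density are ``soft'' outputs of monotonicity and $\epsilon$-regularity, but promoting these to rectifiability is the deep structural fact. The alternative route, closer to Lin's treatment \cite{Lin1999} of harmonic maps, is a direct tangent-measure analysis showing that blow-ups of $\mu$ at $\cH^1$-a.e.\ point of $\Gamma$ are supported on a line; this is more involved but delivers the same conclusion and would be preferable if additional geometric information about tangent directions were required later in the paper.
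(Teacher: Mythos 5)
Your proposal is correct and follows essentially the same route as the paper: pass the almost-monotonicity formula to the limit measure to define $\Theta$ and establish its upper semi-continuity and bounds, kill $\sing(u)$ in $\cH^1$-measure by a Vitali covering argument, verify that the $1$-density of $\nu$ exists and lies in $[\epsilon_0,C]$ for $\nu$-a.e.\ point, and invoke Preiss's theorem. The only (immaterial) variation is that you obtain $r^{-1}\sigma(B_r(x))\to 0$ for $\nu$-a.e.\ $x$ via Besicovitch differentiation of the mutually singular pair $(\sigma,\nu)$ combined with the uniform upper density bound, whereas the paper deduces $\nu(\sing(u))=0$ directly from $\cH^1(\sing(u))=0$.
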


The interested reader can find a detailed discussion of the concept of rectifiablity in DeLellis' lecture notes \cite{DeLellis2008}.
For our purposes it shall suffice to recall the definition.

\begin{definition}
  A subset $\Gamma \subset M$ is called \defined{$\cH^k$--rectifiable} if there exists a countable collection $\set{\Gamma_i}$ of $k$--dimensional Lipschitz submanifolds such that
  \begin{equation*}
    \cH^k\(\Gamma \setminus \bigcup\nolimits_i \Gamma_i\) = 0.
  \end{equation*}
  A measure $\mu$ on $M$ is called \defined{$\cH^k$--rectifiable} if there exist a non-negative Borel measurable function $\Theta$ and a $\cH^k$--rectifiable set $\Gamma$ such that for any Borel set $A$
  \begin{equation*}
    \mu(A) = \int_{A\cap\Gamma} \Theta \,\cH^k.
  \end{equation*}
\end{definition}

Since $\Gamma$ is $\cH^1$--rectifiable, at $\cH^1$--a.e.~point $x \in \Gamma$, it has a well-defined tangent space $T_x\Gamma$ and $\nu$ has a \defined{tangent measure}, i.e., the limit
\begin{equation*}
  T_x\nu := \lim_{\epsilon \to 0} \frac1\epsilon ({\exp}\circ s_\epsilon)^* \nu
\end{equation*}
exists and
\begin{equation*}
  T_x\nu = \Theta(x) \,\cH^1\lfloor T_x\Gamma.
\end{equation*}
Here $s_\epsilon(x) := \epsilon x$.

To prove \autoref{Prop_NuRectifiable} we will make use of the following deep theorem, whose proof is carefully explained in \cite{DeLellis2008}.

\begin{theorem}[\citet{Preiss1987}]
  \label{Thm_Preiss}
  If $\mu$ is a locally finite measure on $M$ and $m \in \N_0$ is such that for $\mu$--a.e. $x \in M$ the density
  \begin{equation*}
     \Theta_\mu^m(x) := \lim_{r \downarrow 0} \frac{\mu(B_r(x))}{r^m}.
  \end{equation*}
 exists and is finite, then $\mu$ is $\cH^m$--rectifiable.
\end{theorem}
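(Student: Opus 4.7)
Since rectifiability is a local property and the density hypothesis is coordinate-invariant (up to a bi-Lipschitz distortion that does not affect existence and finiteness of $\Theta^m_\mu$), the plan is to reduce to the case $M = \R^n$ via charts and prove the theorem there. Throughout one works with the \emph{tangent measures} to $\mu$ at $x$: writing $T_{x,r}(y) = (y-x)/r$, let
\begin{equation*}
  \mathrm{Tan}(\mu,x) := \set{ \nu : \nu = \lim_{k\to\infty} c_k (T_{x,r_k})_* \mu \text{ weakly, for some } r_k \downarrow 0, c_k>0 }.
\end{equation*}
The first step is to show, using the finiteness and existence of $\Theta_\mu^m(x)$ together with a Besicovitch covering argument, that for $\mu$--a.e.\ $x$ every tangent measure $\nu \in \mathrm{Tan}(\mu,x)$ is non-zero and has the homogeneity property $\nu(B_r(y)) = \Theta^m_\mu(x)\, r^m$ for $\nu$--a.e.\ $y$ and all small $r>0$; such $\nu$ are called $m$--\emph{uniform}.

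The second, and technically decisive, ingredient is the ``tangent measures to tangent measures are tangent measures'' principle: for $\mu$--a.e.\ $x$, if $\nu \in \mathrm{Tan}(\mu,x)$ and $y \in \supp\nu$, then $\mathrm{Tan}(\nu,y) \subset \mathrm{Tan}(\mu,x)$. This is proved by a careful diagonalization argument. Combined with step one, it implies that at a $\mu$--typical point $x$ the cone $\mathrm{Tan}(\mu,x)$ is closed under taking tangents and consists entirely of $m$--uniform measures. Hence it suffices to analyze the class of $m$--uniform measures on $\R^n$.

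The heart of the proof — and the main obstacle — is the classification of $m$--uniform measures: I would show that any such $\nu$ is \emph{flat}, i.e.\ a constant multiple of $\cH^m$ restricted to an $m$--dimensional affine subspace of $\R^n$. This is the hardest part of Preiss's paper. The strategy is to analyze the moments
\begin{equation*}
  b_{k}(y,s) := \int e^{-|z-y|^2/s^2} P_k(z-y) \,d\nu(z)
\end{equation*}
for harmonic polynomials $P_k$ of degree $k$, derive asymptotic expansions in $s$ using $m$--uniformity, and show, via an intricate induction on the degree of a ``tangent measure at infinity'' together with the tangent-of-tangent closure, that all higher-order corrections must vanish. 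This forces $\supp\nu$ to be an affine $m$--plane.

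Having established that $\mu$--a.e.\ $x$ admits a flat tangent measure, I would conclude by invoking the Marstrand--Mattila rectifiability criterion: a locally finite measure with positive and finite $m$--density $\mu$--a.e.\ and a flat tangent measure at $\mu$--a.e.\ point is $\cH^m$--rectifiable. This criterion is proved by a Lipschitz graph construction combined with Besicovitch differentiation, and is comparatively standard once flatness of tangents is in hand.
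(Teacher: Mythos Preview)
The paper does not prove this theorem at all: it is quoted as a result of Preiss \cite{Preiss1987}, with a pointer to the exposition in \cite{DeLellis2008}, and then used as a black box in the proof of \autoref{Prop_NuRectifiable}. So there is nothing in the paper to compare your argument against.

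That said, your sketch contains a genuine mathematical error. You write that ``any such $\nu$ is \emph{flat}'', i.e.\ that every $m$--uniform measure on $\R^n$ is a constant multiple of $\cH^m$ on an $m$--plane. This is false. The standard counterexample (Kowalski--Preiss) is $\cH^3$ restricted to the light cone $\set{x \in \R^4 : x_4^2 = x_1^2 + x_2^2 + x_3^2}$, which is $3$--uniform but not flat. So the classification step as you state it cannot succeed, and this is precisely the obstacle that makes Preiss's theorem deep.

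What Preiss actually proves is weaker but sufficient: at $\mu$--a.e.\ $x$ the cone $\mathrm{Tan}(\mu,x)$ contains \emph{at least one} flat measure. The moment analysis you allude to is used not to show that a given uniform measure is flat, but to show that its \emph{tangent measures at infinity} are flat; one then needs a separate connectedness argument to conclude that the flat measures, being in the closure of $\mathrm{Tan}(\mu,x)$ under this limiting procedure, already lie in $\mathrm{Tan}(\mu,x)$ itself. Combined with the Marstrand--Mattila criterion (which only requires one flat tangent at a.e.\ point, and which you cite correctly), this yields rectifiability. Your outline has the right ingredients in roughly the right order, but the central claim about uniform measures must be weakened, and the connectedness step is missing.
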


\begin{proof}[Proof of \autoref{Prop_NuRectifiable}]
  The proof has five steps.

  \setcounter{step}{0}
  \begin{step}
    \label{Step_MuMonotonicity}
    With the same constant as in \autoref{Prop_Monotonicity} and for all $x \in M$ and $0 < s \leq r$
    \begin{equation*}
      e^{c s}s^{-1} \mu(B_s(x)) \leq e^{c r}r^{-1} \mu(B_r(x)) + cr^2.
    \end{equation*}
  \end{step}

  This is not quite a trivial consequence of \autoref{Prop_Monotonicity} because $(\mu_i)$ only weakly converges to $\mu$;
  hence, we only know that $\mu(\bar B_r(x)) \geq \limsup_{i\to\infty} \mu_i(\bar B_r(x))$ and $\liminf_{i\to\infty} \mu_i(B_r(x)) \geq \mu(B_r(x))$.

  For $x \in M$ set
  \begin{equation*}
    \sR_x := \set{ r \in (0,r_0] : \mu(\del B_r(x)) > 0 }.
  \end{equation*}
  If $r \notin \sR_x$, then it follows from \autoref{Prop_Monotonicity} that 
  \begin{equation*}
    e^{c s}s^{-1} \mu(B_s(x)) \leq e^{c r}r^{-1} \mu(B_r(x)) + cr^2.
  \end{equation*}
  The general case follows by an approximation argument.
  Note that $\sR_x$ is at most countable.
  Thus, given $r \in \sR_x$,
  we can find a sequence $(r_i)$ such that
  $s < r_i < r$, $r_i \notin \sR_x$, and $r := \lim_{i\to \infty} r_i$.
  By dominated convergence
  \begin{equation*}
    \mu(B_r(x)) = \lim_{i \to \infty} \mu(B_{r_i}(x)).
  \end{equation*}

  \begin{step}
    \label{Step_Theta}
    The limit
    \begin{equation*}
      \Theta(x) := \lim_{r\downarrow 0} r^{-1} \mu(B_r(x))
    \end{equation*}
    exists for all $x \in M$.
    The function $\Theta \co M \to [0,\infty)$ is upper semi-continuous, it vanishes outside $S$, is bounded and $\Theta(x) \geq \epsilon_0$ for all $x \in S$.
  \end{step}

  The existence of the limit is a direct consequence of \autoref{Step_MuMonotonicity}.

  To see that $\Theta$ is upper semi-continuous, let $(x_i)$ be a sequence of points in $M$ converging to a limit point $x = \lim_{i \to \infty} x_i$.
  Let $r \notin \sR_x$ and $\epsilon > 0$.
  For $i \gg 1$
  \begin{align*}
    \Theta(x_i)
    \leq e^{c r} r^{-1} \mu(B_r(x_i)) + c r^2
    \leq e^{c r} r^{-1} \mu(B_{r+\epsilon}(x)) + c r^2.
  \end{align*}
  Therefore, $\limsup_{i \to \infty} \Theta(x_i) \leq e^{c r} r^{-1} \mu(B_{r}(x)) + cr^2$.
  Taking the limit as $r \to 0$ shows that $\Theta$ is upper semi-continuous.

  The last part is clear.
  
  \begin{step}
    \label{Step_ThetaU}
    $\Theta_u^*$ vanishes $\cH^{1}$--a.e. in $M$, i.e., $\cH^1(\sing(u)) = 0$.
  \end{step}

  Given $\epsilon > 0$, set
  \begin{align*}
    E_\epsilon := \set{ x \in M : \Theta_u^*(x) > \epsilon }.
  \end{align*}
  Given $\delta > 0$, choose $\{x_1, \ldots, x_m\} \subset E_\epsilon$ and $\{r_1, \ldots, r_m\} \subset (0,\delta]$ such that the balls
  $B_{2r_j}(x_j)$ cover $E_\epsilon$, but the balls $B_{r_j}(x_j)$ are pairwise disjoint.
  Moreover, we can arrange that
  \begin{align*}
    \frac{1}{r_j} \int_{B_{r_j}(x_j)} \abs{\nabla u}^2 > \epsilon.
  \end{align*}
  Since $u$ is smooth on $M \setminus S$, we must have $E_\epsilon \subset S$.
  Hence,
  \begin{equation*}
    \sum_{j=1}^m r_j
    \leq \frac{1}{\epsilon} \sum_{j=1}^m \int_{B_{r_j}(x_j)} \abs{\nabla u}^2
    \leq \frac{1}{\epsilon} \int_{N_\delta(S)} \abs{\nabla u}^2
  \end{equation*}
  where $N_\delta(S) = \{ x \in M : d(x,S) < \delta \}$.
  The right-hand side goes to zero as $\delta$ goes to zero.
  Thus $\cH^1(E_\epsilon) = 0$ for all $\epsilon > 0$.
  This concludes the proof.

  \begin{step}
    $\nu$ is $\cH^1$--rectifiable.
  \end{step}

  By \autoref{Step_Theta} for any $x \in M \setminus \sing(u)$ the density
  \begin{equation*}
    \Theta_\nu(x) = \lim_{r \downarrow 0} \frac{\nu(B_r(x))}{r}
  \end{equation*}
  exists and agrees with $\Theta(x)$.
  In general $\Theta^*_\nu \leq \Theta < \infty$, which implies that $\nu \ll \cH^1$ (see, e.g., \cite[Proposition 2.2.2]{Krantz2008}).
  By \autoref{Step_ThetaU}, $\cH^1(\sing(u)) = 0$ and, hence, $\nu(\sing(u)) = 0$.
  Applying \autoref{Thm_Preiss} yields the assertion.

  \begin{step}
    We prove the proposition.
  \end{step}

  We have already proved the assertion about $\sing(u)$.
  Since $\nu$ is $\cH^1$--rectifiable and $\Gamma = \supp(\nu)$, it follows that $\Gamma$ is $\cH^1$--rectifiable and $\nu$ can be written as
  \begin{equation*}
    \nu = \tilde\Theta \,\cH^1\lfloor \Gamma
  \end{equation*}
  for some $\tilde \Theta$.
  By \autoref{Step_ThetaU}, $\Theta_\nu(x) = \tilde\Theta(x)$ for $\cH^1$--a.e.~$x \in \Gamma$.
\end{proof}


\section{Bubbling analysis}
\label{Sec_BubblingAnalysis}

We will now show that the ``lost energy'' goes into the formation of bubbles transverse to $\Gamma$.
To state the main result recall that an orientation on $N_x\Gamma$ induces a canonical complex structure and an orientation of $N_x\Gamma$ is canonically determined by the choice of a unit tangent vector $v \in T_x\Gamma \subset T_xM$ since $M$ is oriented.

\begin{prop}
  \label{Prop_Bubbles}
  If $x \in \Gamma$ is smooth, i.e., $T_x\Gamma$ exists and $x \notin \sing(u)$, then there exists a $(-I(v))$--holomorphic sphere $\fz_x \co N_x\Gamma \cup \set{\infty} \to X := \fX_x$ with
  \begin{equation}
    \label{Eq_EnergyInequality}
    \sE(\fz_x) := \int_{S^2} \abs{\rd \fz_x}^2 \leq \Theta(x).
  \end{equation}
  Here we have picked some unit vector $v \in T_x\Gamma$.
\end{prop}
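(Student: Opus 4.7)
The plan is a blow-up analysis at the smooth point $x\in\Gamma$, adapting Lin's bubble extraction for stationary harmonic maps \cite{Lin1999}. Work in normal coordinates centred at $x$ with $T_x\Gamma=\R v$, extended to a positive orthonormal frame $(v,e_2,e_3)$, and trivialise $\fX$ over a small ball so that at $x$ the connection form vanishes and $I(v),I(e_2),I(e_3)$ agree with standard constant complex structures $I_1,I_2,I_3$ on $X=\fX_x$ satisfying $I_2 I_3 = I_1$. In this chart \eqref{Eq_FueterPerturbed} becomes the constant-coefficient Fueter equation on a ball of $\R^3$ together with lower-order terms that vanish at $x$.

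First I would perform a two-scale blow-up. Pick a sequence $\lambda_i\downarrow 0$ realising a diagonal subsequence along which the rescaled measures $(s_{\lambda_i})^*\mu$ converge to the tangent measure $\Theta(x)\,\cH^1\lfloor T_x\Gamma$ of $\mu$ at $x$ (which exists because $x$ is a smooth point of $\Gamma$ and $x\notin\sing(u)$). Then, using a Hofer-type point-picking argument along the tangent line, choose points $z_i$ with $\lambda_i^{-1} d(x,z_i)$ bounded and scales $\kappa_i\to 0$ realising local maxima of $|\nabla u_i|$ on balls of radius comparable to $\kappa_i$ about $z_i$. Setting
\begin{equation*}
w_i(y) := u_i\bigl(\exp_{z_i}(\kappa_i y)\bigr),
\end{equation*}
one obtains a sequence of Fueter maps on growing balls of $\R^3$ (the perturbation $\fp$ scales by $\kappa_i$, curvature and $\nabla I$ by $\kappa_i^2$) with $|\nabla w_i|(0)=1$ and, by \autoref{Prop_EpsilonRegularity} and the point-picking, uniform $C^1$-bounds on compact sets. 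Bootstrapping via \autoref{Rmk_HigherEpsilonRegularity} and Arzel\`a--Ascoli produces a non-constant $C^\infty$-limit $w_\infty\co \R^3\to X$ satisfying $\sum_{i=1}^3 I_i \del_i w_\infty=0$ with finite energy.

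Second I would argue $w_\infty$ is translation invariant along $\R v$. The flat monotonicity formula \autoref{Prop_FlatMonotonicity} applied to $w_\infty$ from an arbitrary centre $p\in\R^3$ yields
\begin{equation*}
\int_{\R^3}\rho_p^{-1}\,|\del_{r_p}w_\infty|^2<\infty,
\end{equation*}
so each renormalised energy $r\mapsto r^{-1}\int_{B_r(p)}|\rd w_\infty|^2$ is monotone and has limits at $0$ and $\infty$. By construction the defect measure of the rescaled sequence is carried by $\R v$, and the second rescaling preserves this tangential structure; hence the limiting renormalised energy of $w_\infty$ at infinity is that of a configuration invariant under translations along $\R v$. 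Comparing the $L^2(\rho^{-1}_p\,\rd\vol)$-radial bound at two distinct centres on this line forces $w_\infty$ to be $0$-homogeneous about both, and thus translation invariant along $\R v$. Consequently $w_\infty$ descends to a non-constant, finite-energy map $\fz\co \R^2\iso N_x\Gamma\to X$. Substituting $\del_1 w_\infty=0$ into the Fueter equation gives $I_2\del_2\fz+I_3\del_3\fz=0$, equivalently $\del_2\fz=I_1\del_3\fz$, which is the Cauchy--Riemann equation for $\fz$ with respect to $I_1=I(v)$ and the orientation on $N_x\Gamma$ induced by $v$. The classical $2$-dimensional removable singularity theorem then extends $\fz$ to a non-trivial $I(v)$-holomorphic sphere $\fz_x\co N_x\Gamma\cup\{\infty\}\to X$.

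Finally, the energy inequality \eqref{Eq_EnergyInequality} follows by tracing the construction: for every $R>0$,
\begin{equation*}
\sE(\fz)\leq \sE(w_\infty)\leq \liminf_{i\to\infty}\int_{B_{R\kappa_i}(z_i)}|\nabla u_i|^2,
\end{equation*}
and choosing $R\to\infty$ after $i\to\infty$ while keeping $R\kappa_i\to 0$, together with the weak convergence $\mu_i\wto\mu$ and the description $\nu=\Theta\,\cH^1\lfloor\Gamma$ at the smooth point $x\notin\sing(u)$, gives the bound by $\Theta(x)$. The main obstacle I anticipate is the translation-invariance step: the point-picking must be calibrated so that the second-scale limit exhausts the concentration exactly along $T_x\Gamma$ (rather than requiring a further blow-up), and the monotonicity-radial argument must be upgraded from $0$-homogeneity about one point to genuine translation invariance along the whole tangent line. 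The holomorphicity and the energy bookkeeping are then routine consequences.
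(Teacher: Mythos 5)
Your overall architecture (two-scale blow-up, translation invariance along $T_x\Gamma$, descent to an $I(v)$-holomorphic map on $N_x\Gamma$) matches the paper's, but the step you yourself flag as the main obstacle — translation invariance — is argued in a way that cannot work. You propose to apply the monotonicity formula to the limit $w_\infty$ at two centres $p$ on the line $\R v$ and conclude $0$-homogeneity about both, hence translation invariance. But the radial term $\int \rho_p^{-1}|\del_{r_p}w_\infty|^2$ equals the \emph{difference} of the renormalised energies at $r=\infty$ and $r=0$; for the expected limit (the pullback to $\R^3$ of a non-constant holomorphic sphere) this difference is strictly positive, since $\frac1r\int_{B_r(p)}|\rd w_\infty|^2\to 0$ as $r\downarrow 0$ and tends to a positive constant as $r\to\infty$. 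So $w_\infty$ is \emph{not} $0$-homogeneous, and indeed cannot be: a map that is $0$-homogeneous about two distinct points of $\R v$ descends to a $0$-homogeneous holomorphic map on $N_x\Gamma$, which is constant. You have conflated the tangent-cone analysis (\autoref{Prop_ConicalTangentCones}, where the renormalised energies at the two radii do converge to the same value) with the bubble extraction, where by design they do not. The paper instead proves \emph{asymptotic} translation invariance at the level of the sequence: it applies the monotonicity formula to $u_{i;\epsilon_i}$ centred at the displaced point $4v\in T_x\Gamma$, where both boundary terms converge to $\Theta(x)$ because $|\rd u_{i;\epsilon_i}|^2\,\cH^3\wto\Theta(x)\,\cH^1\lfloor T_x\Gamma$, forcing $\int|\del_v u_{i;\epsilon_i}|^2\to 0$ on a cube around the origin; a Hardy--Littlewood maximal function estimate then transfers this smallness to \emph{all} second scales $\delta_i\epsilon_i$ at $\cH^1$-a.e.\ tangential position, which is exactly what is needed after the second rescaling. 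Some version of this sequence-level argument is unavoidable.

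Two smaller points. First, $w_\infty$ does not have finite energy on $\R^3$: the scaling gives $\int_{B_R(0)}|\rd w_i|^2=\kappa_i^{-1}\int_{B_{R\kappa_i}(z_i)}|\nabla u_i|^2\lesssim R\,\Theta(x)$, i.e.\ linear growth, and your chain $\sE(\fz)\leq\sE(w_\infty)\leq\liminf_i\int_{B_{R\kappa_i}(z_i)}|\nabla u_i|^2$ is off by the factor $\kappa_i^{-1}$; the correct bookkeeping bounds the transverse energy $\sE(\fz)$ by the limit of the \emph{renormalised} energy over cylinders $Q_{\delta,\delta}$, which is $\Theta(x)$. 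Second, the Hofer point-picking normalising $|\nabla w_i|(0)=1$ does give a non-constant limit, but to get uniform bounds on all compact sets and to keep the concentration aligned with $T_x\Gamma$ you still need the energy-continuity argument the paper runs (the function $e_i$ with $|\del_v e_i|\to 0$); the paper's alternative calibration, choosing $\delta_i$ so that $\sup_w\delta_i^{-1}\int_{B_{\delta_i}(z_i,w)}|\rd u_{i;\epsilon_i}|^2=\epsilon_0/8$, achieves non-triviality and the small-energy hypothesis of \autoref{Prop_EpsilonRegularity} simultaneously.
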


\begin{remark}
  It is immaterial whether we choose $v$ or its opposite $-v$ since this results in changing the complex structures on both $N_x\Gamma$ and $X$.
  In particular, the above cannot be used to fix an orientation of $\Gamma$;
  however, the existence of $\fz_x$ does restrict the possible tangent directions, see \autoref{Sec_TangentDirections}.
\end{remark}

\begin{remark}
  The reason that \eqref{Eq_EnergyInequality} may be strict is that we only extract one bubble of what is an entire bubbling-tree, cf.~\citet{Parker1993} for the general notion of a bubbling tree, and \citet[Section 7]{Bellettini2015} for a discussion on how to extract a bubbling tree in the our situation.
\end{remark}

The holomorphic sphere $\fz_x$ is obtained by blowing-up $(u_i)$ around the point $x \in \Gamma$.
We assume a trivialisation of $\fX$ in a neighbourhood $U$ of $x$ has been fixed;
see \autoref{Ex_FueterMap}.
We use the following notation:
given any map $u \co U \to X$ and a scale factor $\lambda > 0$, we define a rescaled map $u_{\lambda} \co B^3_{r_0/\lambda}(0) \to X$ by
\begin{equation}
  \label{Eq_RescaledMap}
  u_\lambda := u({\exp}\circ s_\lambda).
\end{equation}
with $s_\lambda(y) := \lambda y$.
We write $(z,w)$ to denote points in $T_x\Gamma \times N_x\Gamma = T_xM$ and work with generalised cubes of the form
\begin{equation*}
  Q_{r,s}(z_0,w_0) := B_r(z_0) \times B_s(w_0) \subset T_x\Gamma \times N_x\Gamma = T_xM.
\end{equation*}

\begin{proof}[Proof of \autoref{Prop_Bubbles}]
  We proceed in four steps.

  \setcounter{step}{0}
  \begin{step}[Preliminary scale fixing]
    \label{Step_TangentMeasureScale}
    There exists a null-sequence $(\epsilon_i) \subset (0,1)$ such that
    \begin{equation*}
      \abs{\rd u_{i;\epsilon_i}}^2 \,\cH^3 \wto T_x\nu = \Theta(x) \,\cH^1\lfloor T_x\Gamma.
    \end{equation*}
  \end{step}

  By definition, $T_x\nu$ is the weak limit of $\epsilon^{-1}({\exp}\circ s_{\epsilon})^*\nu$ as $\epsilon$ tends to zero.
  Since $x \notin \sing(u)$, we have
  \begin{equation*}
    \lim_{\epsilon \to 0} \frac{1}{\epsilon}({\exp}\circ s_\epsilon)^*\nu
    = \lim_{\epsilon \to 0} \frac{1}{\epsilon}({\exp}\circ s_\epsilon)^*\mu.
  \end{equation*}
  Thus
  \begin{equation*}
    T_x\nu = \lim_{\epsilon \to 0}  \lim_{i \to \infty} \frac1\epsilon({\exp}\circ s_\epsilon)^*\mu_i
    =\lim_{i \to \infty} \frac1\epsilon({\exp}\circ s_{\epsilon_i})^*\mu_i
  \end{equation*}
  for some null-sequence $(\epsilon_i)$.
  This implies the assertion since
  \begin{equation*}
    \frac1{\epsilon_i}({\exp}\circ s_{\epsilon_i})^*\mu_i = \abs{\rd u_{i;\epsilon_i}}^2 \,\cH^3.
  \end{equation*}

  \begin{step}[Asymptotic translation invariance]
    \label{Step_TranslationInvariance}
    After passing to a subsequence, we can assume that there exists a null-sequence $(z_i)$ such that
    \begin{equation}
      \label{Eq_TranslationInvariance}
      \lim_{i\to\infty} \sup_{s \leq 1} \frac1s \int_{Q_{s,1}(z_i,0)} \abs{\del_v u_{i;\epsilon_i}}^2 = 0.
    \end{equation}
  \end{step}

  \setcounter{substep}{0}
  \begin{substep}
    We have
    \begin{equation*}
      \lim_{i\to \infty} \int_{Q_{2,1}(0)} \abs{\del_v u_{i,\epsilon_i}}^2 = 0.
    \end{equation*}
  \end{substep}

  Denote by $\del_\rho$ the radial vector field emanating from $4v$.
  By \autoref{Prop_Monotonicity}, for for $0 < s \leq r$
  \begin{multline}
    \label{Eq_PreTranslationInvariance}
    \int_{B_r(4v)\setminus B_s(4v)} e^{c \epsilon_i \tau}\tau^{-1} \abs{\del_\rho u_{i;\epsilon_i}}^2 \\
    \leq e^{c \epsilon_i r} r^{-1} \int_{B_r(4v)} \abs{\rd u_{i;\epsilon_i}}^2 - e^{c \epsilon_i s} s^{-1} \int_{B_s(4v)} \abs{\rd u_{i;\epsilon_i}}^2 + c\epsilon_i^2r^2.
  \end{multline}
  As $i$ tends to infinity the first two terms on the right-hand side both converge to $\Theta(x)$, since $T_x\nu = \Theta(x) \,\cH^1\lfloor T_x\Gamma$ and the last term tends to zero.

  Since $Q_{2,1}(0) \subset B_8(4v)\setminus B_1(4v)$, it follows that
  \begin{equation*}
    \lim_{i\to \infty} \int_{Q_{2,1}(0)} \abs{\del_\rho u_{i,\epsilon_i}}^2 = 0.
  \end{equation*}
  This completes the proof, because along $T_x\Gamma \cap B_2(0)$ the vector fields $\del_\rho$ and $v$ are colinear and $\abs{\del_v u_{i,\epsilon_i}}^2 \,\cH^3$ converges to zero outside $T_x\Gamma$.

  \begin{substep}
    \label{Step_AETranslationInvariance}
    For $\cH^1$--a.e.~$z \in B_1(0) \subset T_x\Gamma$
    \begin{equation}
      \label{Eq_AETranslationInvariance}
      \lim_{i\to\infty} \sup_{s \leq 1} \frac1s \int_{Q_{s,1}(z,0)} \abs{\del_v u_{i;\epsilon_i}}^2 = 0.
    \end{equation}
  \end{substep}

  Define $f_i\co B_2(0) \subset T_x\Gamma \to [0,\infty)$ by
  \begin{equation*}
    f_i(z) := \int_{B_1(0) \subset N_x\Gamma} \abs{\del_v u_{i;\epsilon_i}}^2(z,\cdot)
  \end{equation*}
  and denote by $Mf_i\co B_1(0) \subset T_x\Gamma \to [0,\infty)$ the Hardy--Littlewood maximal function associated with $f_i$:
  \begin{equation*}
    Mf_i(z) := \sup_{s \leq 1} \frac1s \int_{B_s(z) \subset T_x\Gamma} f_i.
  \end{equation*}
  We need to show that the set
  \begin{equation*}
    A = \set{ z \in B_1(0) : \liminf_{i\to\infty} Mf_i(z) > 0}
  \end{equation*}
  is such that $\cH^1(A) = 0$.
  If we set
  \begin{equation*}
    A_{i,\delta} :=  \set{ z \in B_1(0) : Mf_i(z) \geq \delta},
  \end{equation*}
  then
  \begin{equation*}
    A = \bigcup_{\delta > 0} \bigcup_{I \in \N} \bigcap_{i = I}^\infty A_{i,\delta}.
  \end{equation*}
  
  By the weak-type $L^1$ estimate for the maximal operator, for each $\delta > 0$
  \begin{equation*}
    \cH^1(A_{i,\delta}) \lesssim \frac{\Abs{f_i}_{L^1}}{\delta}.
  \end{equation*}
  Since $\Abs{f_i}_{L^1} \to 0$, we have
  \begin{equation*}
    \cH^1\(\bigcap_{i = I}^\infty A_{i,\delta}\) = 0;
  \end{equation*}
  hence, $\cH^1(A) = 0$ by monotonote convergence.

  \begin{substep}
    We prove \eqref{Eq_TranslationInvariance}.
  \end{substep}

  By \autoref{Step_AETranslationInvariance}, for each $j \in \N$ we can find $z_j \in B_{1/j}(0)$ such that
  \begin{equation*}
    \lim_{i\to\infty} \sup_{s \leq 1} \frac1s \int_{Q_{s,1}(z_j,0)} \abs{\del_v u_{i;\epsilon_i}}^2 = 0.
  \end{equation*}
  Now apply a diagonal sequence argument.

  \begin{step}[Bubble detection]
    \label{Step_BubbleDetection}
    There exists a null-sequence $(\delta_i) \in (0,1/2)$ such that, for each $i \gg 1$,
    \begin{equation}
      \label{Eq_BubbleDetection}
      \max_{w \in \bar B_{1/2}(0)} \frac{1}{\delta_i} \int_{B_{\delta_i}(z_i, w)} \abs{\rd u_{i,\epsilon_i}}^2 = \epsilon_0/8;
    \end{equation}
    moreover, if $w_i \in \bar B_{1/2}(0)$ denotes a point at which this maximum is already, then $(w_i)$ is a null-sequence.
  \end{step}

  By \autoref{Step_TangentMeasureScale}, we have
  \begin{equation*}
    \liminf_{i\to\infty} \max_{w \in \bar B_{1/2}(0)} \frac{1}{\delta} \int_{B_{\delta}(z_i,w)} \abs{\rd u_{i;\epsilon_i}}^2 = \Theta(x) \geq \epsilon_0
  \end{equation*}
  for all $\delta > 0$,
  while for fixed $i \in \N$ and $w \in \bar B_{1/2}(0) \subset N_x\Gamma$
  \begin{equation*}
    \lim_{\delta \downarrow 0} \frac{1}{\delta} \int_{B_{\delta}(z_i,w)} \abs{\rd u_{i;\epsilon_i}}^2 = 0.
  \end{equation*}
  Hence, we can find a null sequence $(\delta_i)$ such that
  \begin{equation*}
    \max_{w \in \bar B_{1/2}(0)} \frac{1}{\delta_i} \int_{B_{\delta_i}(z_i, w)} \abs{\rd u_{i,\epsilon_i}}^2 = \epsilon_0/8.
  \end{equation*}

  If (after passing to a subsequence) we can find $\sigma >0$ and $(w_i) \in \bar B_{1/2}(0)\setminus B_{\sigma}(0)$ such that the maximum in \eqref{Eq_BubbleDetection} is achieved at $w = w_i$, then by \autoref{Prop_Monotonicity} the density of $T_x\nu$ at $(0,w)$ would be positive, contradicting \autoref{Step_TangentMeasureScale}.

  \begin{step}
    We prove \autoref{Prop_Bubbles}.
  \end{step}

  Let $(w_i)$ be as in \autoref{Step_BubbleDetection}. 
  Define
  \begin{equation*}
    \tilde u_i := \tilde u_i(\cdot) := u_{i;\delta_i\epsilon_i}\bigl(\delta_i^{-1}(z_i,w_i)+\cdot\bigr).
  \end{equation*}
  By construction
  \begin{equation*}
    \max_{w \in B_{(1/2 - \abs{w_i})\delta_i}(0)} \int_{B_{1}(0, w)} \abs{\rd \tilde u_i}^2 = \epsilon_0/8
  \end{equation*}
  with the maximum achieved at $w = 0$.

  From \autoref{Prop_EpsilonRegularity} and \autoref{Rmk_HigherEpsilonRegularity} we obtain $C^\infty_\loc$--bounds on $\tilde u_i$ which allow us to pass to a limit $u \co B_1(0)\times N_x\Gamma \to X$, which solves the Fueter equation.
  It follows from \autoref{Step_TranslationInvariance}, that
  \begin{equation*}
    \lim_{i\to \infty} \int_{Q_{1,1/2\delta_i}(0,0)} \abs{\del_\nu \tilde u_i}^2 = 0.
  \end{equation*}
  Hemce, $u$ is going to be constant in $z \in B_1(0) \subset T_x\Gamma$;
  hence, $u$ is the pullback of a map $\fz\co N_x\Gamma \to X$.
  We can choose the orthonormal frame $(v_1,v_2,v_3)$ on $T_xM$ constant and with $v_1 = v \in T_x\Gamma$ and $v_2, v_3 \in N_x\Gamma$.
  With respect to this frame the Fueter operator takes the form
  \begin{equation*}
    \fF = I(v_1)\del_v + I(v_2)\delbar
  \end{equation*}
  with $\delbar = \del_{v_2} + (-I(v))\del_{v_3}$.
  Thus $\fz$ is $(-I(v))$--holomorphic.
\end{proof}

\begin{question}
  What happens near non-smooth points of $\Gamma$?
\end{question}


\section{Constraints on tangent directions}
\label{Sec_TangentDirections}

By \autoref{Prop_Bubbles}, if $x \notin \sing(u)$ and $v \in ST_x\Gamma$, then $\fX_x$ must admit a non-trivial $(-I(v))$--holomorphic sphere $\fz_x$ of area at most $\Theta(x)$.
Since $\Theta$ is upper semi-continuous, it achieves a maximum $A_{\max}$ on $\Gamma$.
Thus, the area of $\fz_x$ is bounded by $A_{\max}$ and the following shows that the possible tangent directions of $\Gamma$ are strongly constrained.

\begin{prop}
  \label{Prop_TangentDirections}
  Let $X$ be a simple hyperkähler manifold with $b_2(X) \geq 6$.
  Given $A_{\max} > 0$, there exists only finitely many $I_\xi \in \fH(X)$ for which there exists a rational curve $C$ in $(X,I_\xi)$ with
  \begin{equation*}
    \area(C) = \<[C],\omega_\xi\> \leq A_{\max}.
  \end{equation*}
  Here $\omega_\xi = g(I_\xi\cdot,\cdot)$.
\end{prop}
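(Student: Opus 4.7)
The plan is to translate the problem via BBF-duality into a lattice-finiteness statement on $H^2(X,\Z)$, and then to conclude using Amerik--Verbitsky's bound on the BBF square of classes of rational curves.

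First, the setup. Let $q$ denote the Beauville--Bogomolov--Fujiki form on $H^2(X,\R)$, which has signature $(3,b_2-3)$. Let $H := \langle[\omega_1],[\omega_2],[\omega_3]\rangle \subset H^2(X,\R)$; this is the positive-definite $3$--plane corresponding to the twistor family, and the $[\omega_i]$ are $q$-orthogonal with common $q$-norm by the $SU(2)$-symmetry of the hyperkähler structure, so $[\omega_\xi] = \sum_i \xi_i[\omega_i]$ traces out a $q$-sphere of fixed radius as $\xi$ varies over $S^2$. Using Poincaré duality together with the non-degeneracy of $q$ on $H^2$, to any curve class $[C]\in H_2(X,\Z)$ we associate a class $\beta \in H^2(X,\Q)$ characterised by $\int_C \alpha = q(\beta,\alpha)$ for all $\alpha\in H^2(X,\R)$.

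Second, I reduce to a finiteness statement about $\beta$. A standard Hodge-theoretic computation for hyperkähler manifolds shows that $[C]$ is of type $(1,1)$ with respect to $I_\xi$ if and only if the $q$-orthogonal projection $\beta_H$ of $\beta$ onto $H$ is a real multiple of $[\omega_\xi]$; write $\beta_H=\lambda[\omega_\xi]$. The area of $C$ then equals
\begin{equation*}
  \area(C) = q(\beta,[\omega_\xi]) = \lambda \cdot q([\omega_\xi],[\omega_\xi]),
\end{equation*}
so under the hypothesis $\area(C)\leq A_{\max}$ the coefficient $\lambda$ is positive and bounded above by a constant independent of $\xi$. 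In particular $\beta_H\neq 0$, and under the identification $H\simeq\R^3$ given by the orthonormalised basis $[\omega_i]$, we recover $\xi = \beta_H/\|\beta_H\|_q \in S^2$. Thus each admissible $\beta$ determines the complex structure $I_\xi$ uniquely, and it suffices to prove that only finitely many classes $\beta$ arise.

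Third, I combine the area bound with Amerik--Verbitsky. The bound on $\lambda$ yields $q(\beta_H,\beta_H) \leq C$ for a constant $C=C(A_{\max})$ depending only on $A_{\max}$ and the fixed hyperkähler structure. By the theorem of Amerik--Verbitsky (valid for simple hyperkähler manifolds with $b_2\geq 5$ and applied here in the form requiring $b_2\geq 6$), the BBF square $q(\beta,\beta)$ of the class of any rational curve on a complex structure in the twistor family of $X$ is bounded below by a universal constant $-N$ depending only on $X$. Since $q$ is negative-definite on $H^\perp$ and $\beta = \beta_H + \beta_{H^\perp}$ is a $q$-orthogonal decomposition,
\begin{equation*}
  -q(\beta_{H^\perp},\beta_{H^\perp}) \;=\; q(\beta_H,\beta_H) - q(\beta,\beta) \;\leq\; C+N.
\end{equation*}
Hence $\beta$ is $q$-bounded in $H^2(X,\R)$, and integrality (up to the bounded denominator from the BBF-duality) forces $\beta$ to range over a finite set.

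The main obstacle is the third step: one needs the Amerik--Verbitsky lower bound to hold uniformly across the entire twistor family, not merely for rational curves on a single fixed complex structure; this is where their theorem on MBM classes and the monodromy action enters, and is precisely the point at which $b_2\geq 6$ is used. Given the correct statement of their result, the rest is elementary Hodge-theoretic bookkeeping on the twistor family.
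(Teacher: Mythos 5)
Your proposal is correct and follows essentially the same route as the paper: both pass to the BBF--dual class, use the orthogonal decomposition into the positive $3$--plane spanned by the K\"ahler classes plus its negative-definite complement, bound the positive part by the area and the negative part by the Amerik--Verbitsky lower bound on squares of MBM classes, and conclude by finiteness of bounded-denominator classes in a bounded region together with the fact that the dual class determines $\xi$. No substantive differences.
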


If $X$ is a $K3$ surface, then this is essentially contained in \citet[Proposition 3.1]{Bryan2000}.
Its proof mainly uses some facts about the $K3$--lattice $(H^2(K3,\Z),\cup)$.
The appropriate replacement of the cup-product for general simple hyperkähler manifold is the \defined{Beauville--Bogomolov--Fujiki (BBF) form} $q \co S^2H^2(X,\Z) \to \Z$.
We refer the reader to \cites{Beauville1983,Bogomolov1978,Fujiki1987} for details about the BBF form.
For our purposes it suffices to recall that:
\begin{itemize}
\item
  $q$ is non-degenerate, i.e., the induced map $H^2(X,\Q) \to H^2(X,\Q)^*$ is an isomorphism.
  In particular, for each $C \in H_2(X,\Z)$ there exists a unique $\gamma \in H^2(X,\Q)$ such that
  \begin{equation}
    \label{Eq_HolomorphicSphereDuality}
    q(\gamma,\cdot) = \<C,\cdot\> \in H^2(X,\Q)^*.
  \end{equation}
\item
  $q$ has signature $(3,b_2(X)-3)$ with ${\rm span}\set{ [\omega_\xi] : \xi \in S^2 }$ forming a maximal positive definite subspace.
  We denote the perpendicular maximal negative definite subspace by $N$.
\end{itemize}

\begin{theorem}[Amerik--Verbitsky]
  \label{Thm_BoundedBBFSquare}
  If $X$ is a simple hyperkähler manifold with $b_2(M) \geq 6$, then there exists an positive integer $\sigma \in \N$ such that
  \begin{equation*}
    q(\gamma,\gamma) \geq -\sigma
  \end{equation*}
  for all $\gamma \in H^2(X,\Q)$ with \eqref{Eq_HolomorphicSphereDuality} for some $C$ represented by a $I_\xi$--holomorphic sphere for some $I_\xi \in \fH(X)$.
\end{theorem}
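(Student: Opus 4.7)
The plan is to apply the Amerik--Verbitsky boundedness theorem for MBM (\emph{monodromy birationally minimal}) classes. Given $\gamma$ as in the hypothesis, dual via $q$ to the class $[C]$ of an $I_\xi$-holomorphic sphere of area at most $A_{\max}$, the first step is to check that $\gamma$ is of type $(1,1)$ with respect to $I_\xi$: for any $\beta \in H^{2,0}(X,I_\xi) \oplus H^{0,2}(X,I_\xi)$ one has $q(\gamma,\beta) = \<C,\beta\> = \int_C \beta = 0$ because $C$ is holomorphic. So $\gamma \in H^{1,1}(X,I_\xi) \cap H^2(X,\Q)$, and one readily checks $q(\gamma,\omega_\xi) = \area(C) \leq A_{\max}$.

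Second, I would decompose $[C]$ in the Mori cone of $(X,I_\xi)$ (extended to the non-projective setting using Boucksom's pseudo-effective cone theory and Bayer--Macr\`\i's wall analysis) into a finite positive combination $[C] = \sum_j m_j [C_j]$ of classes of extremal rational curves $C_j$, each still of area at most $A_{\max}$; the BBF-dual classes $\gamma_j$ of the $C_j$ are MBM classes by construction, and $\gamma = \sum_j m_j \gamma_j$.

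Third, I would invoke the main theorem of Amerik--Verbitsky: for a simple hyperkähler deformation type with $b_2 \geq 6$, there is a constant $\sigma_0 \in \N$, depending only on the deformation type, such that every MBM class $\alpha$ satisfies $q(\alpha,\alpha) \geq -\sigma_0$. A universal positive lower bound on the area of any MBM class (which follows from integrality of MBM classes, the bound on $q(\alpha,\alpha)$, and the position of $\alpha$ in $H^{1,1}$) then bounds the total multiplicity $\sum_j m_j$ by a constant $N = N(A_{\max})$, and a Cauchy--Schwarz estimate with respect to the indefinite form $q$ (combined with the splitting $\gamma = c\omega_\xi + \gamma^\perp$ with $\gamma^\perp$ in the negative part) yields $q(\gamma,\gamma) \geq -\sigma$ with $\sigma$ depending only on $N$ and $\sigma_0$.

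The hard part is of course the Amerik--Verbitsky theorem itself, and the reasonable strategy is simply to quote it. Its proof rests on (i) deforming extremal rational curves along twistor families to move MBM classes freely within a monodromy orbit; (ii) the global Torelli theorem, which translates MBM-ness into a purely lattice-theoretic condition on the period domain; and (iii) Ratner-type equidistribution results for the arithmetic monodromy group $\mathrm{Mon}(X) \subset O(H^2(X,\Z), q)$ acting on the period domain. The hypothesis $b_2 \geq 6$ enters precisely in (iii), guaranteeing that the orthogonal group on the relevant quadratic lattice has sufficiently large real rank for the Ratner machinery to apply and force the required finiteness of orbits.
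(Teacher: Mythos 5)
Your argument and the paper's both bottom out in the same external input: Amerik--Verbitsky's boundedness theorem for MBM classes, which both of you (reasonably) quote rather than reprove, and your sketch of its internal mechanism (twistor deformations of rational curves, global Torelli, Ratner-type dynamics on the period domain, with $b_2 \geq 6$ feeding into the last step) is accurate. The gap is in how you reduce to that theorem. Your second step --- decomposing $[C]$ in the Mori cone into a positive combination of \emph{extremal} rational curves --- is not available in the generality required: the complex structures $I_\xi \in \fH(X)$ arising here are generically non-projective (for most $\xi$ the manifold $(X,I_\xi)$ has no curves at all, and when it does, there is no cone/contraction theorem), and neither Boucksom's pseudo-effective cone theory nor Bayer--Macrì's wall analysis (which is specific to moduli of sheaves) supplies such a decomposition with area control. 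The detour is also unnecessary: the paper's proof consists of the single observation that $\gamma$ itself is an MBM class in the sense of \cite{Amerik2014}*{Definition 2.14} --- Amerik--Verbitsky's deformation results apply directly to the class of an irreducible rational curve of negative square, with no extremality hypothesis --- followed by an appeal to \cite{Amerik2014}*{Theorem 5.3}. (Your first step, checking $\gamma \in H^{1,1}(X,I_\xi)$, is correct and is part of verifying this.)

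Separately, your conclusion is not the statement asserted: your $\sigma$ depends on $A_{\max}$ through the multiplicity bound $N(A_{\max})$ and the Cauchy--Schwarz step, whereas the theorem claims a single $\sigma$ uniform over all $I_\xi$-holomorphic spheres with no area hypothesis; the area bound enters only afterwards, in \autoref{Prop_GammaConstraints}. An $A_{\max}$-dependent constant would in fact still suffice for \autoref{Prop_TangentDirections} (and your instinct is not baseless, since for multiple covers $[C] = m[C']$ one has $q(\gamma,\gamma) = m^2 q(\gamma',\gamma')$, so the literal statement requires interpreting $C$ as the class of the underlying irreducible curve), but as written your proof establishes a weaker claim by a route whose key decomposition step does not go through.
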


\begin{proof}
  This follows by observing that $\gamma$ is a MBM class in the sense of \cite[Definition 2.14]{Amerik2014} and then appealing to \cite[Theorem 5.3]{Amerik2014}.
\end{proof}

\begin{remark}
  \autoref{Thm_BoundedBBFSquare} generalises the fact that any class representing a holomorphic sphere in $K3$ has square $-2$.
\end{remark}

\begin{prop}
  \label{Prop_GammaConstraints}
  There exists a constant $c_0 > 0$ such that if $C$ is represented by a $I_\xi$--holomorphic sphere of area $A$, then $\gamma$ as in \eqref{Eq_HolomorphicSphereDuality} is of the form
  \begin{equation}
    \label{Eq_RationalClassDecomposition}
    \gamma = \beta + c_0 A \omega_\xi 
  \end{equation}
  with $\beta \in N$ and
  \begin{equation*}
    q(\beta,\beta) \geq -\sigma - c_0 A^2.
  \end{equation*}
\end{prop}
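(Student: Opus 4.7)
The strategy is to exploit the interaction between the orthogonal decomposition $H^2(X,\R) = P \oplus N$ induced by the BBF form (where $P := \operatorname{span}\{[\omega_\eta] : \eta \in S^2\}$ is the positive-definite twistor subspace) and the fact that $C$ is $I_\xi$-holomorphic. Write $\gamma = \gamma_P + \beta$ with $\gamma_P \in P$ and $\beta \in N$; the proof then has three logical steps.

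First, I would establish that the $\omega_i$ are $q$-orthogonal with a common value $q(\omega_i,\omega_j) = c_0^{-1}\delta_{ij}$ for some constant $c_0 > 0$. This follows from the Fujiki relation $\int_X \alpha^{2n} = c_X\, q(\alpha,\alpha)^n$ applied to $\alpha = \omega_\eta$: the left-hand side is a fixed multiple of the hyperk\"ahler volume, independent of $\eta \in S^2$, so $\eta \mapsto q(\omega_\eta,\omega_\eta)$ is constant on the unit sphere; polarising shows that the $3 \times 3$ Gram matrix $\bigl(q(\omega_i,\omega_j)\bigr)$ is a positive multiple of the identity.

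Second, the key step: I claim $\gamma_P$ is a scalar multiple of $\omega_\xi$. For any $\eta \in S^2$ with $\eta \perp \xi$, pick $\zeta$ so that $(\xi,\eta,\zeta)$ is an orthonormal basis of $\R^3$; then $\omega_\eta + i\omega_\zeta$ is a holomorphic $(2,0)$-form on $(X,I_\xi)$, so its integral over the $I_\xi$-holomorphic curve $C$ vanishes. Taking real parts gives $\langle C, \omega_\eta\rangle = 0$, and by \eqref{Eq_HolomorphicSphereDuality} this reads $q(\gamma,\omega_\eta)=0$. Since $\beta \in N$ is already $q$-orthogonal to every $\omega_\eta$, we conclude $q(\gamma_P,\omega_\eta)=0$ for all $\eta \perp \xi$; by Step~1 this forces $\gamma_P \in \operatorname{span}(\omega_\xi)$. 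Writing $\gamma_P = c\,\omega_\xi$ and pairing with $\omega_\xi$ yields $A = \langle C,\omega_\xi\rangle = q(\gamma,\omega_\xi) = c \cdot c_0^{-1}$, hence $c = c_0 A$ and $\gamma = \beta + c_0 A\,\omega_\xi$ as claimed.

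The bound on $q(\beta,\beta)$ then follows immediately: orthogonality of the decomposition gives $q(\gamma,\gamma) = (c_0A)^2 q(\omega_\xi,\omega_\xi) + q(\beta,\beta) = c_0 A^2 + q(\beta,\beta)$, and combining with $q(\gamma,\gamma) \geq -\sigma$ from \autoref{Thm_BoundedBBFSquare} yields $q(\beta,\beta) \geq -\sigma - c_0 A^2$. The only conceptual step is the use of the holomorphic symplectic $(2,0)$-form to kill the non-$\omega_\xi$ components of $\gamma_P$; once that is in place, everything else is linear algebra on the twistor-sphere subspace and a direct application of the Amerik--Verbitsky bound.
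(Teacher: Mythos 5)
Your proposal is correct and follows essentially the same route as the paper: the paper's (much terser) proof likewise deduces $q(\gamma,\omega_\eta)=0$ for $\eta\perp\xi$ from \eqref{Eq_HolomorphicSphereDuality}, concludes $\gamma=\beta+c_0A\,\omega_\xi$ with $c_0=1/q(\omega_\xi,\omega_\xi)$ independent of $\xi$, and combines this with \autoref{Thm_BoundedBBFSquare}. You have simply made explicit the two facts the paper leaves implicit — the vanishing of $\<C,\omega_\eta\>$ via the holomorphic symplectic form of $(X,I_\xi)$, and the $\xi$-independence of $q(\omega_\xi,\omega_\xi)$ via the Fujiki relation — both of which are the standard justifications.
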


\begin{proof}
  It follows from \eqref{Eq_HolomorphicSphereDuality} that
  \begin{equation}
    q(\gamma,\omega_\eta) = 0
  \end{equation}
  for all $\eta \perp \xi$;
  hence, $\gamma = \beta + c_0A \omega_\xi$ with $c_0 = 1/q(\omega_\xi,\omega_\xi)$, which does not depend on $\xi \in S^2$, and $\beta \in N$.
  Since $q(\gamma,\gamma) \geq -\sigma$, we have
  \begin{equation*}
    q(\beta,\beta) \geq -\sigma - c_0 A^2.
    \qedhere
  \end{equation*}
\end{proof}

\begin{proof}[Proof of \autoref{Prop_TangentDirections}]
  There are only finitely many $\gamma$ as in  \autoref{Prop_GammaConstraints} with $A \leq A_{\max}$ and $\gamma$ determines $\xi \in S^2$ uniquely.
\end{proof}


\appendix
\section{The Heinz trick}
\label{A_HeinzTrick}

Throughout we consider a bounded open subset $U \subset \R^n$ endowed with a smooth metric $g$ which extends smoothly to $\bar U$.
Implicit constants are allowed to depend on the geometry of $U$.

\begin{lemma}[\citet{Heinz1955}]
  \label{Lem_HeinzTrick}
  Fix $d > 0$ and set
  \begin{equation*}
    q := \frac{2}{d} + 1.
  \end{equation*}
  Suppose $f\co U \to [0,\infty)$ and $p,\delta \in \set{0, 1}$ are such that the following hold:
  \begin{enumerate}
  \item
    \label{I_DifferentialInequality}
    We have
    \begin{equation*}
      \Delta f \lesssim f^q +f^p.
    \end{equation*}
  \item
    \label{I_Monotonicity}
    If $B_s(y) \subset B_{r/2}(x) \subset U$, then
    \begin{equation*}
      s^{d-n}\int_{B_s(y)} f \lesssim r^{d-n}\int_{B_r(x)} f + \delta r^2.
    \end{equation*}
  \end{enumerate}
  Then there exists a constant $\epsilon_0>0$ such that for all $B_r(x) \subset U$ with
  \begin{align*}
    \epsilon = r^{d-n}\int_{B_r(x)}f \leq \epsilon_0
  \end{align*}
  we have
  \begin{equation*}
    \sup_{y \in B_{r/4}(x)} f(y) \lesssim r^{-d} \epsilon + \((1-p) + \delta\)r^2.
  \end{equation*}
\end{lemma}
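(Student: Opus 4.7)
The plan is to run the classical Heinz trick. Consider the function
\[
  g(y) := \left(r/2 - |y-x|\right)^d f(y)
\]
on $\bar B_{r/2}(x)$, which vanishes on the boundary sphere, and let $y_0$ be an interior point where it attains its maximum; set $\rho_0 := r/2 - |y_0 - x|$, $M_0 := f(y_0)$ and $\phi_0 := \rho_0^d M_0 = \max g$. Comparing $g(y)$ with $g(y_0)$ for $y \in B_{\rho_0/2}(y_0) \subset B_{r/2}(x)$, and using $r/2 - |y-x| \geq \rho_0/2$ on that sub-ball, yields the pointwise supremum bound $f(y) \leq 2^d M_0$ there. The goal is to bound $\phi_0$ from above, from which the stated conclusion follows because $(r/4)^d f(y) \leq g(y) \leq \phi_0$ for all $y \in B_{r/4}(x)$.

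Combining this supremum bound with condition~(\ref{I_DifferentialInequality}), and using the elementary inequality $f^p \leq f + 1$ valid for $p \in \{0,1\}$, converts the differential inequality on $B_{\rho_0/2}(y_0)$ into the linear form
\[
  \Delta f \leq A M_0^{q-1} f + B,
\]
where $A$ is an absolute constant and $B \lesssim (1-p)$; the extra $f$-term from $f^p$ in the case $p = 1$ is absorbed into $M_0^{q-1}f$ after reducing to $M_0 \geq 1$ (when $M_0 < 1$ the trivial bound $\phi_0 \leq \rho_0^d$ is already of the desired order). I would then invoke the standard interior $L^1$--$L^\infty$ mean-value inequality for nonnegative subsolutions of $\Delta u \leq au + b$ on a sub-ball $B_\lambda(y_0) \subset B_{\rho_0/2}(y_0)$ at the scale
\[
  \lambda := \min\{\rho_0/2,\; c_\star M_0^{-(q-1)/2}\},
\]
chosen so that $A M_0^{q-1}\lambda^2$ is a small universal constant. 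This yields
\[
  M_0 = f(y_0) \lesssim \lambda^{-n}\int_{B_\lambda(y_0)} f + (1-p)\lambda^2,
\]
and condition~(\ref{I_Monotonicity}) then converts the $L^1$ integral into $\lambda^{-d}(\epsilon + \delta r^2)$, which is legal since $B_\lambda(y_0) \subset B_{r/2}(x)$.

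Multiplying through by $\rho_0^d$ and splitting into the two cases according to which branch defines $\lambda$, I would exploit the arithmetic identity $d(q-1)/2 = 1$. When $\lambda = \rho_0/2$ the ratio $\rho_0^d\lambda^{-d}$ is a universal constant and the estimate becomes $\phi_0 \lesssim \epsilon + \delta r^2 + (1-p)\rho_0^{d+2}$ immediately. When $\lambda = c_\star M_0^{-(q-1)/2}$ the same identity gives $\rho_0^d\lambda^{-d} \sim \phi_0$ and $\rho_0^d\lambda^2 \sim \rho_0^{d+2}\phi_0^{-(q-1)}$, so the estimate reads
\[
  \phi_0 \lesssim \phi_0(\epsilon+\delta r^2) + (1-p)\rho_0^{d+2}\phi_0^{-(q-1)}.
\]
Choosing $\epsilon_0$ small enough that the first term on the right is absorbable then forces $\phi_0^q \lesssim (1-p)\rho_0^{d+2}$, i.e.\ $\phi_0 \lesssim (1-p)^{1/q}\rho_0^d$ via $(d+2)/q = d$, again of the required order. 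In either branch one arrives at $\phi_0 \lesssim \epsilon + ((1-p)+\delta)r^{d+2}$ (using $r \leq r_0$ to reconcile the two forms), and dividing by $(r/4)^d$ gives the stated bound on $f$.

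The main obstacle is technical rather than conceptual: one must verify that $c_\star$ and $\epsilon_0$ can be chosen as absolute constants making the mean value inequality applicable in both branches and absorbing the critical term $\phi_0(\epsilon + \delta r^2)$ uniformly in $u$, $x$ and $r$. The rest is careful bookkeeping of powers of $\rho_0$, $\lambda$ and $M_0$ driven by the single identity $d(q-1)/2 = 1$, which is precisely why the critical exponent is $q = 2/d + 1$.
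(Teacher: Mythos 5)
Your architecture is the paper's: the weighted function $(r/2-|y-x|)^d f$, its interior maximum $y_0$, the local bound $f\le 2^d M_0$ on $B_{\rho_0/2}(y_0)$, and mean value inequality plus hypothesis~(\ref{I_Monotonicity}). You differ only in how the self-improving inequality is closed: the paper keeps the nonlinear right-hand side, derives $s^dF\lesssim \epsilon + s^{d+2}(F^q+F^p+\delta)$ for \emph{every} $s\le s_0$, and runs a continuity argument in $t=sF^{1/d}$ (using $t(0)=0$ to stay below the smallest positive root of $t^d(1-ct^2)=c\epsilon$), whereas you linearise $\Delta f\lesssim M_0^{q-1}f+(1-p)$ and evaluate at the single scale $\lambda=\min\{\rho_0/2,\,c_\star M_0^{-(q-1)/2}\}$. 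Both mechanisms are standard, and your bookkeeping in the two branches, driven by $d(q-1)/2=1$, is sound.

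There is, however, one step that fails as written: the reduction to $M_0\ge 1$. You assert that when $M_0<1$ the bound $\phi_0\le\rho_0^d$ is ``already of the desired order''. It is not: $\rho_0^d$ can be as large as $(r/2)^d$, so this only yields $\sup_{B_{r/4}(x)}f\lesssim 1$, whereas the lemma demands $\sup_{B_{r/4}(x)}f\lesssim r^{-d}\epsilon+((1-p)+\delta)r^2$, a quantity that is much smaller than $1$ when $\epsilon\ll r^d$ and $r\ll 1$ (and which, for $p=1$, $\delta=0$, carries no additive term at all). The case $M_0<1$ must still be run through the machinery; the fix is immediate: there $f\le 2^d$ on $B_{\rho_0/2}(y_0)$, so $\Delta f\lesssim f+(1-p)$ with an absolute coefficient, and your mean-value-plus-monotonicity step at scale $\lambda=\min\{\rho_0/2,c_\star\}$ gives $\phi_0\lesssim\epsilon+\delta r^2+(1-p)\rho_0^{d+2}$, with the subcase $p=1$, $\delta=0$ handled by absorption exactly as in your second branch. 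This is how the paper treats its case $F\le 1$. A second, cosmetic point: hypothesis~(\ref{I_Monotonicity}) produces $\lambda^{-d}\delta r^2$, so after multiplying by $\rho_0^d$ and dividing by $(r/4)^d$ the $\delta$-contribution is $\delta r^{2-d}$, not $\delta r^2$; your ``reconciliation'' $\delta r^2\lesssim\delta r^{d+2}$ goes the wrong way for small $r$. The paper's own proof is equally cavalier here and the discrepancy is harmless for the application, but you should not claim to recover the exponent $r^2$ on the $\delta$-term.
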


\begin{remark}[Heinz trick in the subcritical case]
  \label{Rmk_SubcriticalHeinzTrick}
  If $n < d$,
  \begin{equation*}
    \epsilon \leq \epsilon_0 \quad\text{whenever}\quad
    r \leq \(\frac{\epsilon_0}{\int_U f}\)^{\frac{1}{d-n}}.
  \end{equation*}
  In particular, for all compact $K \subset U$, $\Abs{f}_{L^\infty(K)}$ is bounded a priori depending only on $\int_U f$ and $d(K,\del U)$.
\end{remark}

We use the following standard result; see \cite[Theorem 9.20]{Gilbarg2001} or \cite[Proof of Theorem B.1]{Hohloch2009}.

\begin{prop}
  \label{Prop_MeanValue}
  For all $B_r(x) \subset U$ and every smooth function $f \co B_r(x) \to [0,\infty)$
  \begin{equation*}
    f(x) \lesssim r^{-n} \int_{B_r(x)} f~\vol + r^2\Abs{\Delta f}_{L^\infty}.
  \end{equation*}
\end{prop}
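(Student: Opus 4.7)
The plan is to prove the inequality by combining Green's representation formula for $f$ on geodesic balls around $x$ with a coarea-type integration over radii that converts a surface average into the desired volume average. The term $r^2\|\Delta f\|_{L^\infty}$ arises as the error when $f$ fails to be harmonic, while the $r^{-n}$--weighted integral of $f$ captures the harmonic part (compare the mean value property for harmonic functions).

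Concretely, since $B_r(x) \subset U$ and the metric $g$ is smooth on $\bar U$, a standard parametrix construction (comparing to the Euclidean Green's function in normal coordinates centered at $x$, with implicit constants depending only on the geometry of $U$) yields, for each $s \in (0,r]$, a Dirichlet Green's function $G_s$ for $\Delta$ on $B_s(x)$ satisfying $|G_s(x,y)| \lesssim d(x,y)^{2-n}$, hence $\int_{B_s(x)} |G_s|\,\dvol \lesssim s^2$, together with a Poisson kernel $P_s := -\partial_\nu G_s \geq 0$ obeying $\int_{\partial B_s(x)} P_s\, d\sigma = 1$ and $\|P_s\|_{L^\infty(\partial B_s)} \lesssim s^{1-n}$. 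Green's identity applied to the pair $(f, G_s)$ then yields
\begin{equation*}
  f(x) = \int_{\partial B_s(x)} P_s\cdot f\, d\sigma - \int_{B_s(x)} G_s \cdot \Delta f\, \dvol,
\end{equation*}
and since $f \geq 0$, using the $L^\infty$ bounds on $G_s$ and $P_s$ gives
\begin{equation*}
  f(x) \lesssim s^{1-n}\int_{\partial B_s(x)} f\, d\sigma + s^2 \|\Delta f\|_{L^\infty}.
\end{equation*}

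Multiplying by $s^{n-1}$ and integrating in $s$ from $0$ to $r$, the left side becomes $(r^n/n) f(x)$, while the surface integrals assemble via the coarea formula into the volume integral $\int_{B_r(x)} f\,\dvol$, and the error term integrates to $\lesssim r^{n+2}\|\Delta f\|_{L^\infty}$. Dividing through by $r^n$ yields the stated estimate. The main technical point is the uniform construction of $G_s$ on a Riemannian ball with these scaling bounds; this is classical but can be bypassed entirely by quoting the interior $W^{2,p}$ estimate \cite{Gilbarg2001}*{Theorem 9.20} applied to $f$ viewed as a strong solution of $\Delta u = \Delta f$ on $B_r(x)$ and using Sobolev embedding $W^{2,p} \hookrightarrow L^\infty$ for $p > n/2$, which delivers the same $r$--scaling.
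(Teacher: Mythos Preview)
Your proposal is correct. The paper itself does not prove this proposition at all: it simply records it as a standard fact and cites \cite{Gilbarg2001}*{Theorem 9.20} and \cite{Hohloch2009}*{Proof of Theorem B.1}. Your Green's-function argument is a valid self-contained proof, and the integration-in-$s$ trick to convert the sphere average into a ball average is exactly the right way to upgrade the pointwise representation formula. The alternative route you mention at the end---quoting \cite{Gilbarg2001}*{Theorem 9.20}---is precisely what the paper does.

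One small correction: \cite{Gilbarg2001}*{Theorem 9.20} is not an interior $W^{2,p}$ estimate combined with Sobolev embedding; it is the local maximum principle for strong solutions, which directly gives
\begin{equation*}
  \sup_{B_{r}} u \lesssim r^{-n}\int_{B_{2r}} u^+ + r\,\|\Delta u\|_{L^n(B_{2r})},
\end{equation*}
and the second term is bounded by $r^2\|\Delta u\|_{L^\infty}$. Your $W^{2,p}$/Sobolev sketch would instead produce $\|f\|_{L^p}$ rather than $\|f\|_{L^1}$ on the right-hand side, so it does not quite deliver the stated inequality; but since you also reference the correct theorem number, this is a labeling issue rather than a genuine gap.
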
 

\begin{proof}[Proof of Lemma \ref{Lem_HeinzTrick}]
  Define a function $\theta \co B_{r/2}(x) \to [0,\infty)$ by
  \begin{equation*}
    \theta(y) := \left(\frac{r}{2} - d(x,y)\right)^{d} f(y).
  \end{equation*}
  Since $\theta$ is non-negative and vanishes on the boundary of $B_{\frac r2}(x)$, it achieves its maximum
  \begin{equation*}
    M := \max_{y\in B_{\frac r2}(x)} \theta(y)
  \end{equation*}
  in the interior of $B_{\frac{r}{2}}(x)$.
  We will derive a bound for $M$, from which the assertion follows at once.

  Let $y_0$ be a point with $\theta(y_0) = M$, set
  \begin{equation*}
    F := f(y_0)
  \end{equation*}
  and denote by
  \begin{equation*}
    s_0 := \frac{1}{2} \left(\frac r2-d(x,y_0)\right)
  \end{equation*}
  half the distance from $y_0$ to the boundary of $B_{\frac r2}(x)$.
  Each $y \in B_{s_0}(y_0)$ has distance from the boundary of $B_{\frac r2}(x)$ at least $s_0$;
  hence,
  \begin{align*}
    f(y) \leq s_0^{-d}\theta(y) \leq s_0^{-d}\theta(y_0) \lesssim F.
  \end{align*}
  \autoref{Prop_MeanValue} applied to $B_s(y_0)$ together with (\ref{I_DifferentialInequality}) and the above bound yields
  \begin{align*}
    F \lesssim s^{-n} \int_{B_s(y_0)} f + s^2 \left(F^q + F^p\right)
  \end{align*}
  for all $0 \leq s \leq s_0$.
  Combined with (\ref{I_Monotonicity}) this becomes
  \begin{align*}
    F \lesssim s^{-d} \epsilon + s^2 \(F^q + F^p\) +\delta r^2,
  \end{align*}
  which can be rewritten as
  \begin{equation}
    \label{Eq_PolynomialInequality}
    s^dF \lesssim \epsilon + s^{d+2}\(F^q + F^p\) + \delta r^2s^d.
  \end{equation}
  This inequality will yield the desired bound on $M$.
  It is useful to make a case distinction.

  \setcounter{case}{0}
  \begin{case}
    $F \leq 1$.
  \end{case}

  In this case a bound on $M$ follows from simple algebraic manipulations.
  If $p=0$ or $\delta = 1$, then \eqref{Eq_PolynomialInequality} with $s = s_0$ yields
  \begin{equation*}
    M = \theta(y_0) \lesssim s_0^dF \lesssim \epsilon + r^{d+2}.
  \end{equation*}
  If $p=1$ and $\delta = 0$, this bound can be sharpened.
  \eqref{Eq_PolynomialInequality} becomes
  \begin{equation*}
    s^dF \leq \frac{c\epsilon}{1-c s^2}.
  \end{equation*}
  If $cs_0^2 \leq \frac12$, then we obtain
  \begin{align*}
    M \lesssim s_0^dF \lesssim \epsilon;
  \end{align*}
  otherwise, setting $s:=(2c)^{-\frac12} \leq s_0$ yields
  \begin{align*}
    F \lesssim\epsilon,
  \end{align*}
  and thus $M \lesssim \epsilon.$

  \begin{case}
    $F > 1$.
  \end{case}
  
  From \eqref{Eq_PolynomialInequality} we derive
  \begin{align*}
    s^d F \lesssim \epsilon + s^{d+2} F^q + \delta r^2 s^d
  \end{align*}
  for all $0 \leq s \leq s_0$.
  Set $t := t(s) = sF^{1/d}$.
  Then the above inequality can be expressed as
  \begin{align*}
    t^d(1-ct^2) \leq c(\epsilon + \delta r^2).
  \end{align*}
  For sufficiently small $\epsilon > 0$, the corresponding equation $t^d(1 - c t^2) = c (\epsilon+\delta r^2)$ has $d$ small roots $t_1,\ldots,t_d$, which are approximately $\pm (c\epsilon+c\delta r^2)^{\frac1d}$, and two large roots.
  Since $t(0) = 0$ and by continuity, for each $s \in [0,s_0]$, $t(s)$ must be less than the smallest positive root;
  hence, $t(s) \lesssim (\epsilon+\delta r^2)^{\frac1d}$ for all $s \in [0,s_0]$.
  This finishes the proof.
\end{proof}


\section{Compactness for Fueter maps with four dimensional source manifold}
\label{Sec_DimensionFour}

\begin{prop}
  Let $V$ be a $4$--dimensional Euclidean vector space, $H$ a quaternionic vector space, $I \co S\Lambda^+V^* \to S(\Im\H)$ an isometric identification of the unit length self-dual forms on $V$ with the unit imaginary quaternions and $\iota \co \Lambda^+V^* \to \so(V)$.
  The endomorphism $\Psi \in \End(\Hom(V,H))$ defined by
  \begin{equation*}
    \Psi T := \sum_{i=1}^3 I(\omega_i)\circ T \circ \iota(\omega_i)
  \end{equation*}
  has eigenvalues $1$ and $-3$.
  Here we sum over an orthonormal basis $(\omega_1,\omega_2,\omega_3)$ of $\Lambda^+V^*$.
  We denote the $(-3)$--eigenspace by $\Hom_I(V,H)$.
\end{prop}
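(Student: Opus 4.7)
The plan is to establish directly that $\Psi$ satisfies the polynomial equation $(\Psi-1)(\Psi+3) = 0$, and then to verify that both roots actually occur as eigenvalues with the announced multiplicities.

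I would begin by recording the quaternionic relations for the two families. For a positively oriented orthonormal basis $(\omega_1,\omega_2,\omega_3)$ of $\Lambda^+V^*$, the endomorphisms $\iota(\omega_i)$ are complex structures on $V$ satisfying $\iota(\omega_i)\iota(\omega_j) = \epsilon_{ijk}\iota(\omega_k)$ for $i\neq j$ (this is the standard normalisation of $\iota$ which sends unit self-dual forms to complex structures compatible with the orientation). Similarly, $I(\omega_i) \in S\im\H$ are unit imaginary quaternions satisfying $I(\omega_i)I(\omega_j) = \epsilon_{ijk}I(\omega_k)$. The crucial point is that the signs $\epsilon_{ijk}$ match on the two sides, which follows from interpreting the isometry $I\co S\Lambda^+V^* \to S\im\H$ as orientation preserving.

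Next, I would compute
\begin{equation*}
  \Psi^2 T = \sum_{i,j} I(\omega_i)I(\omega_j)\,T\,\iota(\omega_j)\iota(\omega_i).
\end{equation*}
The diagonal $i=j$ contributes $\sum_i I(\omega_i)^2 T \iota(\omega_i)^2 = 3T$. For $i\neq j$, I would pair the $(i,j)$-term with the $(j,i)$-term: since both $I(\omega_i)I(\omega_j)$ and $\iota(\omega_j)\iota(\omega_i)$ reverse sign when $i,j$ are swapped, the two terms coincide, so each unordered pair contributes twice. Substituting the cross-relations, the pair with complementary index $k$ yields $2\,\epsilon_{ijk}\epsilon_{jik}\,I(\omega_k)\,T\,\iota(\omega_k) = -2\,I(\omega_k)\,T\,\iota(\omega_k)$; summing over the three pairs gives $-2\Psi T$. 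Hence $\Psi^2 + 2\Psi - 3 = 0$, so the spectrum of $\Psi$ is contained in $\{1,-3\}$.

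Finally, I would confirm both eigenvalues occur and compute their multiplicities via traces. On $\Hom(V,H) \iso V^*\otimes H$, the map $T\mapsto I(\omega_i) T \iota(\omega_i)$ factorises as $\iota(\omega_i)^T \otimes I(\omega_i)$ and therefore has trace $\tr\iota(\omega_i)\cdot\tr I(\omega_i) = 0$, each factor being traceless as a complex structure. Hence $\tr\Psi = 0$. Combined with $\dim_{\R}\Hom(V,H) = 4\dim_{\R}H$, the system $a+b = 4\dim_{\R}H$, $-3a+b=0$ forces $\dim_{\R}\Hom_I(V,H) = \dim_{\R}H$ and the $(+1)$-eigenspace to have $\R$-dimension $3\dim_{\R}H$. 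As a sanity check one can exhibit the $(-3)$-eigenspace explicitly: any $T\co V\to H$ that intertwines $\iota(\omega_i)$ with $I(\omega_i)$ for each $i$ satisfies $\Psi T = \sum I(\omega_i)^2 T = -3T$, producing $\dim_{\R}H$ independent elements of $\Hom_I(V,H)$.

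The main obstacle is keeping the sign bookkeeping straight: the argument hinges on $\epsilon_{ijk}\epsilon_{jik} = -1$ delivering the off-diagonal contribution as exactly $-2\Psi T$, which in turn requires the cross-product relations for $\iota$ and $I$ to carry compatible orientations. This is what the hypothesis that $I$ is an isometric identification (implicitly orientation preserving), together with the standard normalisation of $\iota$, is designed to ensure.
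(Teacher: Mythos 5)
The paper states this proposition without proof, so there is nothing to compare against; your argument is correct and is the standard one: the quadratic relation $\Psi^2+2\Psi-3=0$ follows exactly as you compute once the quaternion relations for $I(\omega_i)$ and $\iota(\omega_i)$ carry matching signs, and the trace computation pins down the multiplicities $\dim_{\R}H$ and $3\dim_{\R}H$. Your closing caveat identifies the one genuinely delicate point: with the opposite orientation convention for $I$ (or for $\iota$) the off-diagonal contribution becomes $+2\Psi T$ and the eigenvalues become $3$ and $-1$, so the statement as written does implicitly assume the orientation-compatible normalisation you describe.
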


Let $M$ be an orientable Riemannian $4$--manifold, let $\fX \xrightarrow{\pi} M$ be a bundle of hyperkähler manifolds together with a fixed identification $I \co S\Lambda^+T^*M \to \fH(\fX)$ of the unit sphere bundle of self-dual forms on $M$ and the bundle of hyperkähler spheres of the fibres of $\fX$ and fix a connection on $\fX$.

\begin{definition}
  A section $u \in \Gamma(\fX)$ is called a \emph{Fueter section} if
  \begin{equation}
    \label{Eq_4DFueter}
    \fF u := \nabla u - \Psi\nabla u = 0 \in \Gamma(u^*\Hom_I(\pi^*TM,V\fX)).
  \end{equation}
\end{definition}

\begin{remark}
  If $M = \R \times N$ for some $3$--manifold $N$, $\fX$ is the pullback of a bundle $\fY$ of hyperkähler manifolds on $N$, $I$ is obtained from an identification $J\co STM \iso \fH(\fX)$ and the connection on $\fX$ is the pullback of a connection on $\fY$, then \eqref{Eq_4DFueter} can be written as
  \begin{equation*}
    \del_t u - \fF u = 0
  \end{equation*}
  with $\fF$ denoting the $3$--dimensional Fueter operator.
  This is the form in which the $4$--dimensional Fueter operator appears in \cite{Hohloch2009}.
\end{remark}

\begin{remark}
  Unlike in the $3$--dimensional case, $\Lambda^+T^*M$ need not be trivial.\footnote{$\Lambda^+T^*M$ being trivial is equivalent to $3\sigma(M)+2\chi(M)=0$ and $w_2(M) = 0$.}
  Thus the analogue of the setup in \autoref{Ex_FueterMap} rarely makes sense globally, and one is almost forced to work with bundles of hyperkähler manifolds.
\end{remark}

The analogue of \autoref{Thm_A} in the $4$--dimensional case is the following result.

\begin{theorem}
  \label{Thm_B}
  Suppose $\fX$ is compact.
  Let $(u_i)$ be a sequence of solutions of the (perturbed) Fueter equation
  \begin{equation*}
    \fF u_i = \fp \circ u_i
  \end{equation*}
  with $\fp \in \Gamma(\fX,\Hom_I(\pi^*TM,V\fX))$ and
  \begin{equation*}
    \sE(u_i) := \int_{M} |\nabla u_i|^2 \leq c_\sE
  \end{equation*}
  for some constant $c_\sE > 0$.
  Then (after passing to a subsequence) the following holds:
  \begin{itemize}
  \item
    There exists a closed subset $S$ with $\cH^2(S) < \infty$ and a Fueter section $u \in \Gamma(M \setminus S, \fX)$ such that $u_i|_{M\setminus S}$ converges to $u$ in $C^\infty_\loc$.
  \item 
    There exist a constant $\epsilon_0 > 0$ and an upper semi-continuous function $\Theta \co S \to [\epsilon_0,\infty)$ such that the sequence of measures $\mu_i := |\nabla u_i|^2 \,\cH^4$ converges weakly to $\mu = |\nabla u|^2 \,\cH^4 + \Theta \,\cH^2\lfloor S$.
  \item
    $S$ decomposes as
    \begin{equation*}
      S = \Gamma \cup \sing(u)
    \end{equation*}
    with
    \begin{align*}
      \Gamma
      &:=
        \supp(\Theta \,\cH^1\lfloor S) \qand \\
      \sing(u)
      &:=
        \set*{
          x \in M
          :
          \limsup_{r\downarrow 0} \frac1{r^2} \int_{B_r(x)} |\nabla u|^2 > 0
        }.
    \end{align*}
    $\Gamma$ is $\cH^2$--rectifiable, and $\cH^2(\sing(u)) = 0$.
  \item
    For each smooth point of $\Gamma$ there exists a non-trivial holomorphic sphere in $\fz_x \co S^2 \to (\fX_x,-I(\xi))$ with $\xi$ a unit self-dual $2$--form on $T_xM$, whose associated complex structure preserves the splitting  $T_xM = T_x\Gamma \oplus N_x\Gamma$.
    Moreover,
    \begin{equation*}
      \Theta(x) \geq \sE(\fz_x) := \int_{S^2} |\rd \fz_x|^2.
    \end{equation*}
  \item
    If $\fX$ is a bundle of simple hyperkähler manifolds with $b_2 \geq 6$, then there is a subbundle $\fri \subset \set{ I \in \End(TM) \co I^2 = -\id }$, depending only on $\sup \Theta$, whose fibres are finite sets such that $T_x\Gamma$ is complex with respect to a complex structure $I \in \fri_x$ for all smooth points $x \in \Gamma$.
  \end{itemize}
\end{theorem}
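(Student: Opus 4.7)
The plan is to follow the strategy used to prove \autoref{Thm_A} section by section, adapting every appearance of codimension two from a $1$-dimensional set to a $2$-dimensional set. The key bookkeeping changes are: the renormalised energy is $r^{-2}\int_{B_r(x)}|\nabla u|^2$; the Heinz trick (\autoref{Lem_HeinzTrick}) is applied with $d=2$ and $q=2$; Preiss' \autoref{Thm_Preiss} is invoked with $m=2$; and in the bubbling analysis the tangent space $T_x\Gamma$ is now $2$-dimensional.

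First I would establish the $4$-dimensional analogues of the two analytic inputs. The $(-3)$-eigenspace decomposition of $\Psi$ yields a $4$-form $\Lambda \in \Omega^4(\fX)$, built from $I$ and the connection, such that
\begin{equation*}
  |\nabla u|^2\,\vol = c_1|\fF u|^2\,\vol + c_2\,u^*\Lambda.
\end{equation*}
Writing $\Lambda = \rd\Omega + \fe$ with $\Omega := i(\uv)\Lambda$ and $|\fe|=O(\ur\delta)$, the monotonicity formula for $r^{-2}e^{cr}\int_{B_r(x)}|\nabla u|^2$ follows by the Stokes-type argument of \autoref{Prop_Monotonicity}. The Bochner-type inequality $\Delta|\nabla u|^2 \lesssim |\nabla u|^4 + 1$ of \autoref{Prop_DifferentialInequality} carries over verbatim, since its proof uses only the covariant algebra of the Fueter operator and not the source dimension. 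Feeding these into \autoref{Lem_HeinzTrick} with $d=2$, $q=2$ yields the $\epsilon$-regularity $\sup_{B_{r/4}(x)}|\nabla u|^2 \lesssim r^{-2}\epsilon+1$ whenever $\epsilon := r^{-2}\int_{B_r(x)}|\nabla u|^2 \leq \epsilon_0$. With monotonicity and $\epsilon$-regularity in place, the contents of \autoref{Sec_ConvergenceAwayFromBlowupLocus}, \autoref{Sec_DecompositonOfBlowupLocus} and \autoref{Sec_RegularityOfBlowUpLocus} go through unchanged after replacing $\cH^1$ by $\cH^2$ and the $r^{-1}$ scaling by $r^{-2}$; this produces the closed set $S$ with $\cH^2(S)<\infty$, $C^\infty_\loc$-convergence on $M\setminus S$, the defect measure decomposition $\mu = |\nabla u|^2\,\cH^4 + \Theta\,\cH^2\lfloor\Gamma$, the decomposition $S = \Gamma\cup\sing(u)$, upper semi-continuity of $\Theta$, and $\cH^2$-rectifiability of $\Gamma$ via Preiss' \autoref{Thm_Preiss}.

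The main new analytic step, which I expect to be the principal obstacle, is the bubbling analysis at a smooth point $x \in \Gamma$. Pick oriented orthonormal bases $(e_1,e_2)$ of $T_x\Gamma$ and $(e_3,e_4)$ of $N_x\Gamma$ and set $\xi := e^1\wedge e^2 + e^3\wedge e^4 \in S\Lambda^+T^*_xM$; a direct computation shows that the complex structure $J_\xi$ on $T_xM$ characterised by $\xi = g(J_\xi\cdot,\cdot)$ preserves the splitting $T_x\Gamma\oplus N_x\Gamma$. After a preliminary rescaling by a null-sequence $(\epsilon_i)$ arranging $|\nabla u_{i;\epsilon_i}|^2\,\cH^4 \to \Theta(x)\,\cH^2\lfloor T_x\Gamma$, asymptotic translation invariance must be established in \emph{both} directions $e_1, e_2$. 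The $3$-dimensional argument of \autoref{Step_TranslationInvariance} uses a single radial field centred off $\Gamma$ that automatically aligns with the $1$-dimensional $T_x\Gamma$; here one has to iterate it, centring the monotonicity off $\Gamma$ along $e_1$ to get invariance along $e_1$, and separately along $e_2$, then applying the weak-type $L^1$ maximal inequality in each of the two source variables to extract a joint sequence of good base points. Once full $T_x\Gamma$-translation invariance is achieved, a second rescaling at a point of maximal energy concentration in $N_x\Gamma$ produces in the limit a Fueter map $\tilde u\co T_xM \to X$ that descends to $\fz_x\co N_x\Gamma\cup\set{\infty}\to X$; in the adapted frame the $(-3)$-eigenspace condition for $\Psi$ reduces on $N_x\Gamma$-dependent maps to the Cauchy--Riemann equation for $I(\xi)$, so $\fz_x$ is $I(\xi)$-holomorphic and $\sE(\fz_x)\leq\Theta(x)$.

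The constraint on tangent directions follows word-for-word from \autoref{Sec_TangentDirections}: since $\Theta$ is upper semi-continuous it attains a finite maximum $A_{\max}$, so \autoref{Prop_TangentDirections} leaves only finitely many possible $I(\xi)\in\fH(\fX_x)$. Under the isometric identification $I$ each such complex structure corresponds to finitely many unit self-dual $2$-forms $\xi \in S\Lambda^+T^*_xM$, and each such $\xi$ determines a complex structure $J_\xi\in\End(T_xM)$ of which $T_x\Gamma$ must be a complex line; these complex structures assemble into the finite-fibred subbundle $\fri \subset \set{I\in\End(TM)\co I^2=-\id}$ of the theorem.
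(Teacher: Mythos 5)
Your proposal is correct and follows essentially the same route as the paper, which itself only sketches the argument by listing the same modifications (renormalisation by $r^{-2}$, the self-dual $4$--form $\Lambda$ and the analogues of \eqref{Eq_Energy} and \eqref{Eq_BoundaryContribution}, $\cH^2$ in place of $\cH^1$, Preiss with $m=2$). The only organisational difference is in the bubbling step: the paper, after asserting asymptotic translation invariance along $T_x\Gamma$, quotients by a single direction $v_0 \in T_x\Gamma$ to land back in the three-dimensional situation of \autoref{Sec_BubblingAnalysis}, whereas you establish invariance in both tangential directions (via monotonicity centred at several points of $T_x\Gamma$ whose radial directions span) and extract the transverse holomorphic sphere directly in four dimensions; both work, and your identification of $\xi$ and of the splitting-preserving complex structure $J_\xi$ is consistent with the paper's reduction.
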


\begin{proof}[Sketch of the proof]
  The proof is analogous to that of \autoref{Thm_A} with a few minor modifications:
  \begin{itemize}
  \item
    The renormalised energy is now
    \begin{equation*}
      \frac{1}{r^2} \int_{B_r(x)} |\nabla u|^2.
    \end{equation*}
  \item 
    In the proof of the monotonicity formula one now uses the $4$--form $\Lambda \in \Omega^4(\fX)$ obtained from the section of $\Lambda^+\pi^*TM \otimes \Lambda^2V\fX$ induced by $I$.
    Direct computation shows that \eqref{Eq_Energy} still holds.
    Similarly, one can verify the analogue of \eqref{Eq_BoundaryContribution}.
  \item
    The proof of the $\epsilon$--regularity and convergence outside $S$ carry over mutatis mutandis.
  \item
    In the bubbling analysis, $u_{i;\lambda_i}$ will be asymptotically translation invariant in the direction of $T_x\Gamma$.
    Fix a unit vector $v_0 \in T_x\Gamma$.
    Since, asymptotically, everything is invariant in the direction of $v_0$, we arrive back at the situation in \autoref{Sec_BubblingAnalysis}.
    \qedhere
  \end{itemize}
\end{proof}


\paragraph{Acknowledgements}
I am grateful to Misha Verbitsky for pointing out his work with Amerik \cite{Amerik2014}, to Costante Bellettini for a discussion of \cite{Bellettini2015} and to Gregor Noetzel and for insightful comments.
I also thank the referee for carefully reading this paper and making numerous helpful comments and suggestions.

\printreferences

\end{document}
